\documentclass[a4paper,11pt]{amsart}

\usepackage{amsmath}
\usepackage{verbatim}
\usepackage{url}

\usepackage[all, cmtip,arrow]{xy}

\usepackage{pb-diagram,pb-xy}
\usepackage{amsthm}

\newcommand{\hs}{\kern 0.75pt}
\newcommand{\hm}{\kern -0.75pt}

\usepackage{amssymb}

\textwidth 16cm
\textheight 22cm
\headheight 0.5cm
\evensidemargin 0.3cm
\oddsidemargin 0.3cm
\parskip 4pt

\numberwithin{equation}{section}

\newtheorem{thm}[equation]{Theorem}
\newtheorem{prop}[equation]{Proposition}

\newtheorem{lem}[equation]{Lemma}
\newtheorem{cor}[equation]{Corollary}

\newtheorem{mainthm}{Main Theorem}

\theoremstyle{definition}

\newtheorem{exple}[equation]{Example}

\newtheorem{rem}[equation]{Remark}

\newcommand{\sgn}{\operatorname{sgn}}

\newcommand{\Z}{\mathbb{Z}}
\newcommand{\F}{\mathbb{F}}

\newcommand{\Q}{\mathbb{Q}}

\newcommand{\HH}{\mathbb{H}}

\newcommand{\R}{\mathbb{R}}

\newcommand{\Spec}{\operatorname{Spec}}

\newcommand{\disc}{\operatorname{disc}}

\marginparwidth 2.5cm

\newcommand{\upv}{{^v\!}}
\newcommand{\Fbar}{{\overline{F}}}
\newcommand{\Fbarv}{{\overline{F}\!_v}}

\newcommand{\an}{{\mathrm{an}}}
\newcommand{\ve}{{\varepsilon}}

\newcommand{\into}{\hookrightarrow}

\begin{document}

\title{Hasse principle for Rost motives}

\author{Mikhail Borovoi}

\address{Borovoi: Raymond and Beverly Sackler School of Mathematical Sciences,
       Tel Aviv University, 6997801 Tel Aviv, Israel}

\email{borovoi@post.tau.ac.il}

\thanks{Borovoi was partially supported by the Hermann Minkowski Center for Geometry and by the Israel Science Foundation (grant No.~870/16). Semenov was partially supported by the SPP 1786 ``Homotopy theory and algebraic geometry'' (DFG)}

\author{Nikita Semenov}

\address{Semenov: Mathematisches Institut, Ludwig-Maximilians-Universit\"at M\"unchen,
Theresienstr. 39, D-80333  M\"unchen, Germany}

\email{semenov@math.lmu.de}

\author{Maksim Zhykhovich}

\address{Zhykhovich: Mathematisches Institut, Ludwig-Maximilians-Universit\"at M\"unchen,
Theresienstr. 39, D-80333  M\"unchen, Germany}

\email{zhykhovi@math.lmu.de}

\begin{abstract}
We  prove a Hasse principle for binary direct summands of the Chow motive of a smooth projective quadric $Q$ over a number field $F$.
Besides, we show that such summands are twists of Rost motives.
In the case when $F$ has at most one real embedding we describe a complete motivic decomposition of $Q$.
\end{abstract}

%\tableofcontents

\maketitle

\section{Introduction}

Chow motives were introduced by Grothendieck, and since then they
became a fundamental tool for investigating the structure of algebraic varieties.
Applications of the Chow motives include, among others, results
on higher Witt indices of quadratic forms \cite{Ka03}, structure of the powers of the fundamental ideal
in the Witt ring \cite{Ka04}, cohomological invariants of algebraic groups (\cite{GPS16} and  \cite{S16}), Kaplansky's problem on the $u$-invariants of fields \cite{Vi07}, and isotropy of involutions \cite{KaZ13}.

With a quadratic form $q$ over a field one can associate two discrete invariants, namely, its splitting pattern and its motivic pattern.
The {\em splitting pattern} is the family of all Witt indices of $q$ over all field extensions of the base field,
and the {\em motivic pattern} (or motivic decomposition type, see \cite{Vi11}) describes
the complete motivic decomposition of the respective projective quadric $q=0$.
As far as we know, to determine their possible values is an open problem even if the base field is a number field.
Indeed, in order to determine them, one needs to consider arbitrary field extensions of the base field,
which can be infinite, and thus, are not number fields anymore.

Another issue of motivic nature which is, as far as we know, open even for quadrics over number fields, is the Zariski birationality problem described in \cite{Ohm96}.

In the present article we address the problem of the structure of the Chow motives of smooth projective quadrics over number fields.
Note that Hoffmann  \cite{Ho15} considered a related problem, namely, motivic isomorphisms between quadrics over number fields
(and over real fields satisfying effective diagonalization).

The classical Hasse--Minkowski theorem \cite[Theorem~66:1]{Meara} says that a non-degenerate quadratic form $q$ over a number field $F$ is isotropic
if and only if it is isotropic over all completions of $F$.
This assertion can be reformulated in the language of Chow motives.
Namely, for the projective quadric $Q$ given by the equation $q=0$, the Tate motive $\Z(0)$ is a direct summand of the motive of $Q$
if and only if it is a direct summand of the motive of $Q_{F_v}$ for each completion $F_v$ of $F$.

Moreover, the Hasse--Minkowski theorem readily implies that for every non-negative integer $m$,
the Tate motive $\Z(m)$ is a direct summand of the motive of $Q$ over $F$
if and only if it is a direct summand of the motive of $Q$ over all completions of $F$.
Indeed, this is equivalent to the condition that the Witt index of $q$ is greater  than $m$.
(Recall that the \emph{Witt index} of a quadratic form $q$ on a vector space is the dimension of a maximal totally isotropic subspace.)

In the present article we prove a generalization
of the above assertion replacing the Tate motive $\Z(m)$ by a {\it binary motive}.

We say that a motive $N$ over an arbitrary field $F$ is a {\em split} motive (resp. a {\em binary split} motive) if it is
a direct sum of a finite number of Tate motives over $F$ (resp. if it is a direct sum of {\em two} Tate motives over $F$).
We say that $N$ is a {\em binary} motive over $F$ if becomes binary split over an algebraic closure $\Fbar$ of $F$.

We work in the category of Chow motives with $\F_2$-coefficients (see \cite{Ma68}, \cite{EKM}).
We consider only non-degenerate quadratic forms (see \cite[7.A]{EKM}).
For a non-degenerate quadratic form $q$ over a field $F$
we denote by $M(q)$ the Chow motive of the corresponding projective quadric given by the equation  $q=0$.
For a number field $F$ and a place $v$ of $F$, we denote by $q_v$ the corresponding quadratic form over the completion $F_v$ of $F$ at $v$.

Our main result is the following theorem:

\begin{mainthm}[Theorem~\ref{maintheorem}]
\label{maintheorem-0}
Let $q$ be a quadratic form over a number field $F$.
Let $N$ be a binary split motive over $F$.
Assume that for every place $v$ of $F$ there exists a direct summand \ $\upv M$ of $M(q_v)$ that is isomorphic to $N$ over an algebraic closure $\Fbar_v$ of $F_v$.
Then there exists a direct  summand $M$ of $M(q)$ that is isomorphic to $N$ over $\Fbar$.
\end{mainthm}

It follows from our proof of Main Theorem~\ref{maintheorem-0} that every indecomposable binary direct summand of a quadric over a number field is a twist of a Rost motive; see Corollary~\ref{c1}.
Recall that the motive of a Pfister form $\pi$ over an arbitrary field is isomorphic to a direct sum of twists of one binary motive, which is called the Rost motive of $\pi$ (see \cite{Ro98}).
Rost motives over an arbitrary field appear in the proof of the Milnor conjecture by Voevodsky \cite{Vo03a}, \cite{Vo03b}.

Note that over every completion of a number field, all quadratic forms are excellent,
 and therefore, the structure of their Chow motives is known (see Section~\ref{excellent-dec}).
Moreover, by the results of Vishik \cite{vish-lens} and Haution \cite{Hau},
every motivic decomposition of a quadric with $\F_2$-coefficients can be uniquely lifted to a motivic decomposition with integer coefficients.
Therefore, our main theorem holds for motives with integer coefficients as well.

Finally, as an application of the Hasse principle for binary motives we obtain a complete motivic decomposition of the motive
of a quadric over any number field with at most one real embedding, for example, over the field of rational numbers $\Q$ (see Corollary~\ref{c2}).

\section{Motivic decomposition of excellent forms}
\label{excellent-dec}

In this section $F$ is an arbitrary field of characteristic not $2$.

A quadratic form $q$ over $F$ is called excellent, if for every field extension $E/F$ the anisotropic part of the form $q_E$ is defined over $F$.

With every anisotropic excellent quadratic form $q$ over $F$ one can associate a strictly decreasing sequence of embedded Pfister forms
$\pi_0 \supset \pi_1 \supset  \ldots \supset \pi_r$, where $r$ is a positive integer and
${\dim (\pi_{r-1})> 2\dim(\pi_r)}$ (see \cite[\S7]{Kn77}, \cite[Definition~5.4.9]{Kahn}, \cite[Section~7]{Kar-Mer}).
Recall that an $n$-fold Pfister form is a form of the type $\langle 1, a_1\rangle\otimes\ldots\otimes\langle 1, a_n\rangle$ for some $a_i\in F^*$,
and the excellent form $q$ is similar to the anisotropic part of the form $\pi_0-\pi_1+\ldots+(-1)^r\pi_r$.
Let $\dim(\pi_i) = 2^{n_i}$ for every  $i \in [0,r]$,  $n_{r-1}> n_r +1$. We have
$$ \dim q =2^{n_0}-2^{n_1} + \ldots + (-1)^r2^{n_r}\, .$$

For an anisotropic Pfister form $\pi$ we denote by $R(\pi)$ the Rost motive of $\pi$ (see \cite[Section~4]{Ro98}).
By \cite[Corollary~7.2]{Kar-Mer} we have

\begin{equation}
\label{dec}
M(q)\simeq (\bigoplus_{i=0}^{m_0-1} R(\pi_0)(i))\bigoplus (\bigoplus_{i=m_0}^{m_0+m_1-1} R(\pi_1)(i))\bigoplus
\ldots \bigoplus (\bigoplus_{i=m_0+\ldots+m_{\tilde{r}-1}}^{m_0+\ldots+m_{\tilde{r}}-1} R(\pi_{\tilde{r}})(i)) \, ,
\end{equation}

\noindent where $\tilde{r} = r$ if $\dim q$ is even and $\tilde{r} = r-1$ if $\dim q$ is odd, and
$$m_j=2^{n_j-1} - 2^{n_{j+1}}+ \ldots + (-1)^{j+r}2^{n_r} \,  $$
for every $j \in [0, \tilde{r}]$.
If we want to specify the excellent form $q$, we write $m_j(q)$, $n_j(q)$, $r(q)$ etc.

 More generally, for every $k \in [0,\tilde{r}]$ we have
\begin{equation}
\label{sum0k}
m_0+\ldots+m_{k} = \left\{ \begin{array}{cl}
(\dim\, [q]_k)/2 & \mbox{ if } k \text{ is odd}, \\
(\dim\, [q]_k)/2 - 2^{n_{k+1}}+ \ldots + (-1)^{r}2^{n_r} & \mbox{ if } k \text{ is even},
\end{array} \right.
\end{equation}
where $[q]_k$ is the  excellent quadratic form given by a strictly decreasing sequence of embedded Pfister forms
$\pi_0 \supset \pi_1 \supset \ldots \supset \pi_k$.  It follows that
\begin{equation}
\label{explicit}
m_0+\ldots+m_{k} = (\dim q - | \dim q - \dim \, [q]_k |)/2 \, .
\end{equation}

Note that $m_0 + \ldots + m_{\tilde{r}}= \Big [\frac{\dim q}{2} \Big ]$, and this is the number of indecomposable direct summands in the complete motivic decomposition of $q$.

Moreover, if $\dim q=2d+2$ is even, then $\disc q=\disc\pi_r$, where $\disc$ denotes the discriminant. In particular, $\disc q$ is non-trivial if and only if $\pi_r$ is a binary form with non-trivial discriminant, and by decomposition~\eqref{dec} this happens if and only if the motive of $q$ contains an indecomposable direct summand $N$ which over a splitting field of $q$ becomes isomorphic to $\F_2(d)\oplus\F_2(d)$, and in this case $N\simeq M(\pi_r)(d)\simeq M(\Spec F(\sqrt{\disc q}))(d)$, where for a smooth projective variety $X$ we denote by $M(X)$ its Chow motive.

Finally, for an arbitrary $n$-dimensional quadratic form $q$ over $F$ with the Witt index $m$
we have by \cite[Proposition~2]{Ro98} (see also \cite[Example~66.7]{EKM}) the following motivic decomposition:

\begin{equation}\label{isotr}
M(q)\simeq \F_2\oplus\ldots\oplus \F_2(m-1)\oplus M(q_\an)(m)\oplus\F_2(n-m-1)\oplus\ldots\oplus \F_2(n-2),
\end{equation}
where $q_\an$ is the anisotropic part of $q$.

\section{Construction of some Pfister forms}

In this section $F$ is a number field. We denote by $\Omega(F)$ the set of all places of $F$.
For $v\in\Omega(F)$, we denote by $F_v$ the completion of $F$ at $v$.
For  two non-zero elements $a, b \in F_v$, we denote by $(a,b)_v\in\{\pm1\}$ their Hilbert symbol (see \cite[\S 63.B]{Meara}).
Recall that $(a,b)_v=1$ if $z^2-ax^2-by^2 = 0$ has a non-zero solution $(x,y,z)$ in $F_v^3$, and $(a,b)_v=-1$ otherwise.
If $a,b\in F^*$, we write $(a,b)_v$ for $(a_v,b_v)_v$, where $a_v,b_v\in F_v$ are the images of $a$ and $b$, respectively.

For a diagonal quadratic form $p = \langle a_1, \ldots, a_n \rangle$, $a_i \in F_v^*$, over $F_v$  we define the Hasse invariant $\ve_v(p)$ as in \cite{Lam} and \cite{Ser}:
\begin{equation}
\label{Hasse-invariant}
\varepsilon_v(p) = \prod_{ i < j } (a_i,a_j)_v\in\{\pm1\}.
\end{equation}

Note that if $v$ is a real place, then every quadratic form $f$ over $F_v=\R$ is equivalent to ${x_1^2+\ldots+x_n^2-y_1^2-\ldots-y_m^2}$ for some $n$ and $m$, and the Hasse invariant of $f$ is given by
\begin{equation}
\label{Hasse-invariant-real}
\ve_v(f)= (-1,-1)_v^{\frac{m(m-1)}{2}}= (-1)^{\frac{m(m-1)}{2}} \, .
\end{equation}

Let $q$ be a quadratic form over $F$.
For $v\in\Omega(F)$, the embedding $F \into F_v$ induced by $v$  allows one to regard $q$ as a quadratic form  over $F_v$, which
we denote by $q_v$.

The classical Hasse--Minkowski theorem \cite[Theorem~66:1]{Meara} immediately implies that
\begin{equation}\label{hm}
\dim q_\an=\max_{v\in\Omega(F)} \dim (q_v)_\an \, .
\end{equation}

In this section we construct Pfister forms with  prescribed local behavior. We shall use these forms in the proof of the main theorem.

We start by recalling several well-known facts about quadratic forms over local fields $F_v$,
where $v \in \Omega(F)$ is a {\em finite} place
(see \cite{Lam}, \cite{Meara}).
First of all, a quadratic form $p$ over $F_v$ is determined by its invariants: dimension, determinant $d(p)\in F_v^*/ {F_v^*}^2 $ and the Hasse invariant
$\varepsilon_v(p) \in \{\pm 1 \}$.

Every quadratic form over $F_v$ of dimension greater than $4$ is isotropic (see \cite[63:19]{Meara}), and there exists a unique (up to isomorphism) anisotropic form of dimension $4$.
This form is a Pfister form and therefore has trivial determinant (see \cite[63:17]{Meara}). Its Hasse invariant can be also computed:

\begin{lem}
\label{4dim}
The $4$-dimensional quadratic form $f$ with trivial determinant over $F_v$ is an\-isotropic if and only if its Hasse invariant is equal to $-(-1,-1)_v$.
\end{lem}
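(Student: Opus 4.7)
The plan is to exploit the classification of non-degenerate quadratic forms over the non-archimedean local field $F_v$ by the triple (dimension, determinant, Hasse invariant), and to compare the Hasse invariants of the two isomorphism classes of $4$-dimensional forms with trivial determinant.

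First I would recall that over $F_v$ (with $v$ a finite place) there is a unique anisotropic $4$-dimensional quadratic form up to isomorphism, namely the reduced norm form of the unique quaternion division algebra over $F_v$. This form is a $2$-fold Pfister form $\langle\langle a,b\rangle\rangle = \langle 1,-a,-b,ab\rangle$ for suitable $a,b \in F_v^*$ with $(a,b)_v = -1$, and in particular has trivial determinant. Hence any $4$-dimensional form $f$ over $F_v$ with trivial determinant is isomorphic either to this Pfister form or to the hyperbolic form $\langle 1,-1,1,-1\rangle$, and the two classes are distinguished precisely by their Hasse invariant.

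Next I would compute both Hasse invariants directly from \eqref{Hasse-invariant}. For $\langle 1,-1,1,-1\rangle$ every Hilbert symbol involving the entry~$1$ is trivial, so the product collapses to the single factor $(-1,-1)_v$, giving $\varepsilon_v(\langle 1,-1,1,-1\rangle) = (-1,-1)_v$. For $\langle\langle a,b\rangle\rangle$, I would decompose it as $\langle 1,-a\rangle \oplus \langle -b,ab\rangle$ and apply the standard formula $\varepsilon_v(q_1 \oplus q_2) = \varepsilon_v(q_1)\varepsilon_v(q_2)(d(q_1),d(q_2))_v$, together with the elementary identities $(c,-c)_v = 1$ and $(c,c)_v = (c,-1)_v$; a short manipulation yields $\varepsilon_v(\langle\langle a,b\rangle\rangle) = (a,b)_v\cdot (-1,-1)_v$. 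Since anisotropy of the Pfister form is equivalent to $(a,b)_v = -1$, this gives $\varepsilon_v(\langle\langle a,b\rangle\rangle) = -(-1,-1)_v$.

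Combining the two computations with the classification statement yields both directions of the lemma: $f$ with trivial determinant is anisotropic if and only if its Hasse invariant equals $-(-1,-1)_v$. I do not expect any genuine obstacle here; the only substantive step is the brief Hilbert-symbol calculation for the Pfister form, while the existence and uniqueness of the anisotropic $4$-dimensional form over $F_v$ and the classification by $(\dim, d, \varepsilon_v)$ are classical.
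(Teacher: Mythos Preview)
Your argument is correct and follows essentially the same route as the paper: both compute $\varepsilon_v(\langle 1,-1,1,-1\rangle)=(-1,-1)_v$ and invoke the classification of forms over $F_v$ by $(\dim,d,\varepsilon_v)$. The only difference is that you additionally compute $\varepsilon_v(\langle\langle a,b\rangle\rangle)$ directly, whereas the paper simply observes that the anisotropic form, having the same dimension and determinant as the hyperbolic one but being non-isomorphic to it, must carry the other Hasse invariant $-(-1,-1)_v$; your extra calculation is correct but not needed.
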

\begin{proof}
The split $4$-dimensional form $\langle 1,-1,1,-1 \rangle$ has trivial determinant and the Hasse invariant $(-1,-1)_v$.
Since $f$ has trivial determinant too and $f\not\simeq \langle 1,-1,1,-1 \rangle$, it must have  a different Hasse invariant.
We conclude that $\ve_v(f)=-(-1,-1)_v$.
\end{proof}

The following proposition provides necessary and sufficient conditions for the existence of a quadratic form over $F$ with prescribed local behavior.
\begin{prop}\label{existence}
Let $n$ be a positive integer, and for every $v \in \Omega(F)$ let $p^v$ be an $n$-dimensional quadratic form over $F_v$. For the existence of an $n$-dimensional quadratic form $p$ over $F$ such that $p_v \simeq p^v $ for all $v \in \Omega(F) $, it is necessary and sufficient that the following conditions hold: \\
$\text{ }(1)$ There exists $c \in F^*$ such that $d(p^v)=c_v\cdot F_v^{*2}$ for all $v \in \Omega(F)$. \\
$\text{ }(2)$ $\varepsilon_v(p^v)= 1$ for almost all $v \in \Omega(F)$. \\
$\text{ }(3)$ $\prod_{v \in \Omega(F)} \varepsilon_v(p^v) = 1$.
\end{prop}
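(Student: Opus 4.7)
For \emph{necessity}, suppose a global form $p$ exists with $p_v\simeq p^v$. Condition (1) is immediate since $d(p)\in F^*/F^{*2}$ restricts to $d(p_v)\in F_v^*/F_v^{*2}$. For (2), diagonalize $p=\langle a_1,\ldots,a_n\rangle$: at every non-archimedean $v$ of odd residue characteristic at which all $a_i$ are units, every Hilbert symbol $(a_i,a_j)_v$ is trivial, so $\varepsilon_v(p^v)=1$ outside a finite set of $v$. Condition (3) then follows from Hilbert reciprocity $\prod_v(a,b)_v=1$ applied to each factor in the product \eqref{Hasse-invariant}.

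For \emph{sufficiency}, I would proceed by induction on $n$. The case $n=1$ is immediate: take $p=\langle c\rangle$. For $n=2$, each $p^v$ is diagonalizable as $p^v\simeq\langle a^v,\,c/a^v\rangle$, and a short computation (using $(x,-x)_v=1$ and bilinearity) gives $\varepsilon_v(p^v)=(a^v,-c)_v$; the problem thus reduces to finding $a\in F^*$ with $(a,-c)_v=\varepsilon_v(p^v)$ for all $v$. Such an $a$ exists by Hilbert reciprocity combined with the Albert--Brauer--Hasse--Noether theorem: hypotheses (2) and (3) state precisely that the prescribed family of Hilbert symbols lies in the image of the global map $a\mapsto ((a,-c)_v)_v$. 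For the inductive step $n\geq 3$, I would pick $a\in F^*$ represented by every $p^v$. At a finite $v$ the form $p^v$ fails to represent at most one square class of $F_v^*$ (none when $\dim p^v\geq 4$, since the unique anisotropic $4$-dimensional form of Lemma~\ref{4dim} is a norm form of a quaternion division algebra and hence universal), and at archimedean $v$ the obstruction reduces to a sign condition; weak approximation therefore produces a single $a\in F^*$ lying in all required local square classes. Witt cancellation yields $p^v\simeq\langle a\rangle\perp q^v$ with $\dim q^v=n-1$, and bilinearity of the Hilbert symbol shows that $\{q^v\}$ still satisfies (1)--(3), with $c$ replaced by $c/a$ and each $\varepsilon_v(q^v)$ obtained from $\varepsilon_v(p^v)$ by an explicit $(a,\cdot)_v$-twist; by induction there exists a global $q$ with $q_v\simeq q^v$, and $p:=\langle a\rangle\perp q$ is the desired form.

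The main obstacle is the base case $n=2$: constructing a global element with prescribed Hilbert symbols is essentially the Hasse--Minkowski theorem for ternary quadratic forms and requires genuine global class field theory input (Hilbert reciprocity together with the surjectivity onto the kernel of the sum map on $2$-torsion of local Brauer groups). By contrast, the inductive step for $n\geq 3$ is largely bookkeeping, the only delicate point being the use of weak approximation to realize $a\in F^*$ avoiding the finitely many forbidden local square classes simultaneously.
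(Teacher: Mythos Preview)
Your plan is sound and would give a correct proof. Two small points deserve explicit mention if you write it out in full: in the base case $n=2$, the existence of $a\in F^*$ with prescribed symbols $(a,-c)_v=\varepsilon_v(p^v)$ requires the side condition that $\varepsilon_v(p^v)=1$ whenever $-c\in F_v^{*2}$ --- this is automatic here (such a $p^v$ is hyperbolic), but it is not literally part of (2)--(3); and in the inductive step with $n=3$, weak approximation only handles finitely many places, so you must first check that the set of finite $v$ at which $p^v$ fails to be universal is finite, which follows from condition~(2).

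That said, the paper does something entirely different and much shorter: it simply cites O'Meara \cite[Theorem~72:1]{Meara}, which is exactly this proposition but phrased for the variant invariant $\varepsilon'_v(f)=\prod_{i\le j}(b_i,b_j)_v=\varepsilon_v(f)\cdot(d(f),d(f))_v$, and then observes via Hilbert reciprocity for $(c,c)_v$ that conditions (2)--(3) for $\varepsilon_v$ and for $\varepsilon'_v$ are equivalent once (1) is assumed. Your argument is self-contained and makes visible where each hypothesis is used (notably how (2) guarantees finiteness of bad places), at the price of reproving material already packaged in O'Meara; the paper's proof is a two-line normalization check that pushes all the arithmetic into the citation.
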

\begin{proof}
The statement is proved in \cite[Theorem 72:1]{Meara}, but with $\ve'_v(p^v)$ instead of $\ve_v(p^v)$,
where for a diagonal quadratic form $f=\langle b_1, \ldots, b_n \rangle$, $b_i \in F_v^*$,  O'Meara considers the invariant
\begin{equation}
\label{Hasse-O'Meara}
\varepsilon'_v(f) = \prod_{ i \le j } (b_i,b_j)_v=\ve_v(f)\cdot (d(f),d(f))_v\in\{\pm1\}.
\end{equation}
Note that  by \cite[Theorem 71:18]{Meara}  we have $(c,c)_v = 1$ for almost all $v \in \Omega(F)$ and
\[\prod_{v \in \Omega(F)} (c,c)_v =1.\]
 Therefore, conditions (1--3) of Proposition \ref{existence}
are satisfied for an element $c\in F^*$ and the numbers  $\ve_v(p^v)\in\{\pm1\}$
if and only if they are satisfied for the same $c$ and  the numbers
$$\ve'_v(p^v) = (c,c)_v \cdot\ve_v(p^v) \in\{\pm1\}.$$
This completes the proof of the proposition.
\end{proof}

Recall that a quadratic form of dimension $n$ is called split if its Witt index has the maximal value $[n/2]$.

\begin{prop}
\label{odd-pi}
Let $F$ be a number field. Let $q$ be an anisotropic quadratic form over $F$ of odd dimension $2d+1$.
Assume that for every place $v \in \Omega(F)$ there is a direct summand $^v\!M$ of $M(q_v)$ such that
${^v\!M_{\Fbarv} \simeq \F_2(d-1) \oplus \F_2(d)}$.
Then there exists a $2$-fold Pfister form $\pi$ over $F$
that splits exactly at those places $v$ of $F$ at which $q$ splits.
\end{prop}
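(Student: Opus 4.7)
The plan is to construct $\pi$ by prescribing its local isomorphism class at each place, then invoke Proposition~\ref{existence}. Let $T \subset \Omega(F)$ denote the (necessarily finite) set of places at which $q$ is not split. For each $v \in T$, let $\pi^v$ denote the unique anisotropic $2$-fold Pfister form over $F_v$, i.e., the norm form of the unique quaternion division algebra over $F_v$. For each $v \notin T$, let $\pi^v := \HH^2$ be the hyperbolic $2$-fold Pfister form. A global form $\pi$ over $F$ with $\pi_v \simeq \pi^v$ at every $v$ will then split at $v$ if and only if $v \notin T$, if and only if $q$ splits at $v$.

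Conditions (1) and (2) of Proposition~\ref{existence} are easily verified: every $\pi^v$ has trivial discriminant (take $c = 1$), while $T$ is finite and $(-1,-1)_v = 1$ at almost every $v$. The crux is condition (3). By Lemma~\ref{4dim} we have $\ve_v(\pi^v) = -(-1,-1)_v$ for $v \in T$, and a direct computation gives $\ve_v(\HH^2) = (-1,-1)_v$ for $v \notin T$. Using the product formula $\prod_v (-1,-1)_v = 1$ this yields
\[
\prod_{v \in \Omega(F)} \ve_v(\pi^v) \;=\; (-1)^{|T|},
\]
so condition (3) is equivalent to $|T|$ being even.

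To establish this parity, pick a representative $c \in F^*$ of $d(q) \in F^*/F^{*2}$ and set $q^{\mathrm{sp}} := \HH^d \perp \langle (-1)^d c\rangle$, a global split form with the same dimension and discriminant as $q$. Define the signed Hasse invariant
\[
s_v(q) \;:=\; \ve_v(q_v)\,\ve_v(q^{\mathrm{sp}}_v)^{-1} \;\in\; \{\pm 1\}.
\]
Applying the product formula (condition (3) of Proposition~\ref{existence}) separately to $q$ and to $q^{\mathrm{sp}}$ gives $\prod_v s_v(q) = 1$. It therefore suffices to establish the equivalence $s_v(q) = -1 \Leftrightarrow v \in T$ at every place.

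At a finite place $v$, non-degenerate odd-dimensional quadratic forms over $F_v$ are classified up to isomorphism by dimension, discriminant, and Hasse invariant; for each admissible pair (dimension, discriminant) there are exactly two classes, one split and one non-split (with $3$-dimensional anisotropic part). Hence $s_v(q) = -1$ iff $q_v \not\simeq q^{\mathrm{sp}}_v$, iff $v \in T$. The main obstacle is the real case, since over $\R$ those three invariants do not determine the form. This is where the hypothesis is essential: for a real place $v$ with $|p_v - n_v| \ge 3$, the existence of $^v\!M$ with $^v\!M_{\Fbarv} \simeq \F_2(d-1)\oplus\F_2(d)$ forces the excellent decomposition (Section~\ref{excellent-dec}) of $(q_v)_{\mathrm{an}}$ to contain a $2$-fold Pfister form positioned so that its shifted Rost motive contributes $\F_2(d-1)\oplus\F_2(d)$. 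Analysing \eqref{dec} and \eqref{sum0k} shows that this happens precisely when $|p_v - n_v| \equiv \pm 3 \pmod 8$. Under this restriction on signatures, a direct calculation with \eqref{Hasse-invariant-real} gives $s_v(q) = -1$ in the non-split subcase and $s_v(q) = 1$ when $|p_v - n_v| = 1$, establishing the equivalence at real places and completing the proof.
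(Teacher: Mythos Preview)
Your proof is correct and follows essentially the same strategy as the paper: prescribe the local $2$-fold Pfister forms $\pi^v$, then verify the product condition of Proposition~\ref{existence} via Hasse invariants, with the real-place step relying on the motivic hypothesis to force $\dim(q_v)_\an\equiv\pm 3\pmod 8$. The paper first normalizes $q$ to trivial determinant and shows $\varepsilon_v(q)=\varepsilon_v(\pi^v)$ directly at every place (so the product formula for $q$ gives condition~(3) immediately), whereas you compare against a split reference form through $s_v(q)$ to conclude $|T|$ is even; this is a cosmetic repackaging, and the ``direct calculation'' you invoke at real places is exactly the signature case analysis the paper writes out in its lemma.
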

\begin{proof}
Since the dimension of the quadratic form $q$ is odd,
without loss of generality we may assume that the form $q$ has trivial determinant.
Indeed, we can replace $q$ by  the form $d(q)\cdot q$, which has trivial determinant and the same motive as $q$ and splits exactly at those places $v$ of $F$ at which $q$ splits.

We apply Proposition \ref{existence} to the family $\mathcal{F}=\{ {\pi}^v \mid \, v \in \Omega(F)\}$ of quadratic forms, where for every place $v \in \Omega(F)$ the form ${\pi}^v$ is defined as follows:
$$ {\pi}^v = \left\{ \begin{array}{l}
\text{the split form $\langle 1,-1,1,-1 \rangle$ of dimension 4, if } q_v \text{ is split}; \\
\mbox{a (unique) anisotropic form of dimension } 4, \text{ if } q_v \text{ is not split and } v \text{ is finite};\\
\langle 1,1,1,1 \rangle, \text{ if } q_v \text{ is not split and } v \text{ is real.}
\end{array} \right.$$
Since $d(\pi^v)=1$ for all $v \in \Omega(F)$, condition~(1) in Proposition~\ref{existence} is satisfied (with $c=1$).

Assume that conditions~(2) and (3) in Proposition~\ref{existence} are also satisfied for the family $\mathcal{F}$. It follows then that there exists a quadratic form $\pi$ such that for every $v \in \Omega(F)$ we have ${\pi}_v \simeq {\pi}^v$. By the Hasse--Minkowski theorem, the quadratic form  $\pi$ represents $1$ and $d(\pi) =1$, because this holds locally for every $v \in \Omega(F)$. Since $\pi$ represents 1, it is equivalent to a diagonal form $\langle 1,a,b,c\rangle$ for some $a,b,c\in F^*$.
Since $d(\pi)=1$, we may take $c=ab$.
We see that
$\pi$ is a $2$-fold Pfister form  $\langle 1,a\rangle\otimes\langle 1,b\rangle$. Clearly, by the construction of $\mathcal{F}$, the form $\pi$ satisfies the conclusion of the proposition.

Finally, the following lemma shows that conditions~(2) and (3) in Proposition \ref{existence} are indeed satisfied for the family $\mathcal{F}$.
\end{proof}

\begin{lem}
Under the hypotheses of Proposition~\ref{odd-pi}, for every $v \in \Omega(F)$ we have ${\varepsilon_v(q)=\varepsilon_v({\pi}^v)}$.
\end{lem}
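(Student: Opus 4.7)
My plan is to verify the equality $\varepsilon_v(q) = \varepsilon_v(\pi^v)$ case by case, depending on whether $q_v$ is split at $v$ and whether $v$ is finite or real, by computing both Hasse invariants explicitly.

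The main input is the excellent decomposition \eqref{dec} applied to $(q_v)_{\an}$ (every form over a local field is excellent). The motivic hypothesis ${}^v M_{\Fbarv} \simeq \F_2(d-1) \oplus \F_2(d)$, combined with the Krull--Schmidt property for Chow motives, forces ${}^v M$ to be a twist of the Rost motive $R(\tilde\pi_v)$ of some Pfister form $\tilde\pi_v$ appearing in the excellent decomposition. Since each $R(\tilde\pi_v)(i)$ becomes $\F_2(i) \oplus \F_2(i + 2^{n-1} - 1)$ over $\Fbarv$ for an $n$-fold $\tilde\pi_v$, matching with $\F_2(d-1) \oplus \F_2(d)$ forces $n = 2$, so $\tilde\pi_v$ is a $2$-fold Pfister form. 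Comparing the (an)isotropy of $\tilde\pi_v$ over $F_v$ with the definition of $\pi^v$---and invoking Lemma~\ref{4dim} together with the uniqueness of the anisotropic $4$-dimensional form over a non-archimedean local field---identifies $\tilde\pi_v \simeq \pi^v$ over $F_v$.

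With this identification, I would write $q_v \simeq \mathbb{H}^m \perp (q_v)_{\an}$ for the Witt decomposition ($m$ being the Witt index of $q_v$) and apply the orthogonal-sum formula $\varepsilon_v(A \perp B) = \varepsilon_v(A)\,\varepsilon_v(B)\,(d(A), d(B))_v$ together with $\varepsilon_v(\mathbb{H}^m) = (-1,-1)_v^{m(m-1)/2}$ and the WLOG trivial-determinant assumption made in the proof of Proposition~\ref{odd-pi}. This expresses $\varepsilon_v(q_v)$ as $\varepsilon_v((q_v)_{\an})$ times a computable factor in $(-1,-1)_v$. The excellent decomposition of $(q_v)_{\an}$ (all its Pfister summands having trivial discriminant) then reduces $\varepsilon_v((q_v)_{\an})$ to $\varepsilon_v(\tilde\pi_v) = \varepsilon_v(\pi^v)$, closing the non-split cases.

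The hardest case is when $q_v$ is split: $(q_v)_{\an}$ is one-dimensional, the excellent decomposition is trivial, and there is no Pfister $\tilde\pi_v$ available for identification. I would instead compute $\varepsilon_v(q_v)$ directly from $q_v \simeq \mathbb{H}^d \perp \langle (-1)^d\rangle$ via the formula \eqref{Hasse-invariant} and the identity $(x,x)_v = (x,-1)_v$, obtaining $\varepsilon_v(q_v) = (-1,-1)_v^{d(d+1)/2}$. Matching this against $\varepsilon_v(\pi^v) = (-1,-1)_v$ requires $d(d+1)/2$ to be odd, and this parity condition is what must be extracted from the global hypotheses: by Hasse--Minkowski, when $\dim q \geq 5$ the form $q_v$ is necessarily anisotropic (hence definite) at some real place, and the motivic hypothesis at such a place, applied to the excellent decomposition of $\langle 1\rangle^{2d+1}$, forces the presence of a $2$-fold Pfister factor, which by the combinatorics of alternating binary expansions is equivalent to $d \equiv 1, 2 \pmod 4$---precisely the parity needed. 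Orchestrating this global-to-local constraint is the main subtle point of the proof.
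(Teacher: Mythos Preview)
Your split case and the extraction of the congruence $d\equiv 1,2\pmod 4$ (equivalently $\dim q\equiv 3$ or $5\pmod 8$) from an anisotropic place match the paper. The gap is in the non-split cases.

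The step ``the excellent decomposition of $(q_v)_{\an}$ reduces $\varepsilon_v((q_v)_{\an})$ to $\varepsilon_v(\tilde\pi_v)$'' is not valid. The excellent form $(q_v)_{\an}$ is only \emph{similar} to the anisotropic part of the alternating Witt sum $\pi_0-\pi_1+\cdots$; this is not an orthogonal decomposition of $(q_v)_{\an}$, so the multiplicative formula for $\varepsilon_v$ does not apply, and in the real case there are many Pfister forms $\pi_i$ in the tower, not just $\tilde\pi_v$, so singling one out has no evident meaning. The asserted equality already fails in the simplest finite case: for $d=2$ and $v$ finite with $\dim(q_v)_{\an}=3$ one has $d((q_v)_{\an})=-1$, hence $\varepsilon_v((q_v)_{\an})=-1$, whereas $\varepsilon_v(\tilde\pi_v)=-(-1,-1)_v$; these disagree over $\Q_2$. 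You also never compute the ``computable factor'' coming from $\HH^m$ or explain why it combines correctly, so the non-split argument is unfinished.

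The paper's route bypasses this entirely. Having fixed $\dim q\equiv 3,5\pmod 8$, at a finite non-split place it uses that forms over $F_v$ of given dimension and determinant are classified by $\varepsilon_v$: the split $(2d{+}1)$-form with trivial determinant has $\varepsilon_v=(-1,-1)_v$ (your split-case computation), so the non-split one has $\varepsilon_v=-(-1,-1)_v=\varepsilon_v(\pi^v)$. At a real non-split place the motivic hypothesis applied to $(q_v)_{\an}$ gives $\dim(q_v)_{\an}\equiv 3,5\pmod 8$ as well, and a short signature/determinant argument then shows the number $m$ of $-1$'s in $q_v$ is divisible by $4$, whence $\varepsilon_v(q_v)=1=\varepsilon_v(\pi^v)$ by \eqref{Hasse-invariant-real}. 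No passage through the excellent Pfister tower is needed.
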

\begin{proof}
Since $q$ is anisotropic over $F$, there exists a place $w \in \Omega(F)$ such that $q_w$ is anisotropic over $F_w$.
Note that every anisotropic form over $F_w$ is an excellent form.
Therefore, the motivic decomposition of $q_w$ is given by formula~\eqref{dec}.
The existence of the motivic direct summand of $M(q_w)$ as in the statement of Proposition~\ref{odd-pi}
implies that $\dim q$ is either $3$ or $5$ modulo $8$. \\
\noindent Let us now check case by case that for every $v \in \Omega(F)$ we have $\ve_v(q)=\ve_v({\pi}^v)$.\\

\noindent \textit{1st case: $v$ is such that $q_v$ is split.} In this case ${\pi}^v$ is split, and therefore, $\ve_v({\pi}^v) = (-1,-1)_v$.
Since $q_v$ is split and has trivial determinant, we have $q \simeq d \, \HH \perp \langle (-1)^d \rangle$, where $\HH$ denotes the hyperbolic plane.
Thus, $\ve_v(q)= (-1,-1)_v^{\frac{d(d-1)}{2}} (-1, (-1)^d)_v^d$. Since $\dim q = 2d+1$ is $3$ or $5$ modulo $8$, we obtain in both cases $\ve_v(q)= (-1,-1)_v$. \\

\noindent \textit{2nd case: $v$ is finite and $\dim (q_v)_\an=3$.} By Lemma~\ref{4dim}, we have $\ve_v({\pi}^v) = -(-1,-1)_v$.
Since a split quadratic form of dimension $2d+1$ and with trivial determinant has the Hasse invariant $(-1,-1)_v$, for our non-split form we have $\ve_v(q)= -(-1,-1)_v$. \\

\noindent \textit{3rd case: $v$ is real and $\dim (q_v)_\an>1$.} It is clear that $\ve_v({\pi}^v) = 1$.
Taking into account formula~\eqref{Hasse-invariant-real},
 by the same argument as in the beginning of the proof we obtain that $\dim (q_v)_\an$ is either $3$ or $5$ modulo $8$.
Let $m$ denote the number of minuses in the signature of $q_v$.

If $\dim (q_v)_\an \equiv \dim q_v $ modulo $8$, then $d((q_v)_\an) = d(q_v)=1$.
Since $(q_v)_\an$ is of odd dimension, then $(q_v)_\an$ is positive definite. It follows that $m = (\dim q_v - \dim (q_v)_\an)/2$ and that $m$ is divisible by $4$.
By formula~\eqref{Hasse-invariant-real}, $\ve_v(q)= 1$.

If $\dim (q_v)_\an \not{\!\!\equiv} \dim q_v $ modulo $8$, then one of these dimensions is $3$ modulo $8$ and another one is $5$ modulo $8$.
Therefore $q_v=k\HH\perp (q_v)_\an$,
where $k=(\dim q_v -\dim (q_v)_\an)/2$ is odd. It follows that ${d(q_v)=(-1)^k d((q_v)_\an) =-d((q_v)_\an)}$ and, thus, $d((q_v)_\an)=-d(q_v)=-1$. Therefore $(q_v)_\an$ is negative definite.
It follows that $m = (\dim q_v +\dim (q_v)_\an)/2$ and that $m$ is divisible by $4$. By formula~\eqref{Hasse-invariant-real} $\ve_v(q)= 1$.
\end{proof}

\begin{prop}
\label{even-pi}
Let $F$ be a number field. Let $q$ be an anisotropic quadratic form over $F$ of even dimension $2d+2$.
Assume that for every place $v \in \Omega(F)$ there is a direct summand $^v\!M$ of $M(q_v)$ such that
${^v\!M_{\Fbarv}  \simeq \F_2(d-1) \oplus \F_2(d)}$.
Then there exists a $2$-fold Pfister form $\pi$ over $F$
that splits exactly at those places $v$ of $F$ at which $q$ splits.
\end{prop}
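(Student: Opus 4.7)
The plan is to follow the strategy of the proof of Proposition~\ref{odd-pi}. The first step is to observe that the motivic hypothesis forces, at every place $v$, either $q_v$ to be split or $\dim(q_v)_\an=4$ with $(q_v)_\an$ similar to a $2$-fold Pfister form over $F_v$. Indeed, $q_v$ is excellent over $F_v$, and by \eqref{isotr} combined with \eqref{dec} the indecomposable summands of $M(q_v)$ are either Tate motives (from the isotropic part) or twists $R(\pi_i)(j)$ of Rost motives of Pfister forms in the excellent tower of $(q_v)_\an$. A Tate summand $\F_2(d)$ appears only when $q_v$ is split, and the only twist of a Rost motive whose split form is $\F_2(d-1)\oplus\F_2(d)$ is $R(\pi)(d-1)$ with $\pi$ a $2$-fold Pfister form; this rules out the remaining even anisotropic dimensions $2,6,8,\ldots$

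Next I would define the local forms
$$\pi^v=\begin{cases}\langle 1,-1,1,-1\rangle & \text{if }q_v\text{ is split,}\\ (q_v)_\an & \text{if }v\text{ is finite and }q_v\text{ is not split,}\\ \langle 1,1,1,1\rangle & \text{if }v\text{ is real and }q_v\text{ is not split.}\end{cases}$$
At non-archimedean places the unique anisotropic $4$-dimensional form is the norm of the quaternion division algebra, hence a Pfister form; at real non-split places both possibilities $\pm\langle 1,1,1,1\rangle$ for $(q_v)_\an$ are similar to $\langle 1,1,1,1\rangle$, so we take the latter as the Pfister representative. Each $\pi^v$ is a $2$-fold Pfister form with trivial determinant that represents $1$, and $\pi^v$ is split exactly when $q_v$ is split. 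Once conditions~(2) and~(3) of Proposition~\ref{existence} are verified (condition~(1) holds with $c=1$), the proposition yields a $4$-dimensional global form $\pi$ with $\pi_v\simeq\pi^v$; the Hasse--Minkowski theorem then ensures $\pi$ represents $1$, and combined with $d(\pi)=1$ this forces $\pi=\langle 1,a,b,ab\rangle$, a $2$-fold Pfister form splitting exactly where $q$ does.

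The technical heart is the identity
$$\varepsilon_v(\pi^v)=(-1,-1)_v^{(d-1)(d-2)/2}\cdot\varepsilon_v(q)\quad\text{for every }v\in\Omega(F),$$
which I would verify by a case-by-case computation using $\varepsilon_v(k\HH)=(-1,-1)_v^{k(k-1)/2}$, the orthogonal-sum formula $\varepsilon_v(f\perp g)=\varepsilon_v(f)\varepsilon_v(g)(d(f),d(g))_v$, Lemma~\ref{4dim} at non-split finite places, and~\eqref{Hasse-invariant-real} at real places (both $\langle 1,1,1,1\rangle$ and $\langle -1,-1,-1,-1\rangle$ have trivial Hasse invariant, so the identity is insensitive to the sign of $(q_v)_\an$). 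Given the identity, Hilbert reciprocity $\prod_v(-1,-1)_v=1$ together with the global reciprocity $\prod_v\varepsilon_v(q)=1$ yield condition~(3), while condition~(2) follows from the almost-everywhere triviality of both factors on the right.

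The main obstacle is precisely this case-by-case verification of the Hasse invariant identity: the split, finite non-split, and real non-split cases produce rather different-looking local Hasse invariants, and only after reducing the exponents modulo~$2$ do they collapse uniformly to the single correction factor $(-1,-1)_v^{(d-1)(d-2)/2}$. Once the identity is established, the remainder of the argument parallels the odd-dimensional case.
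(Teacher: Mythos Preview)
Your overall strategy is the same as the paper's, and your definition of the family $\{\pi^v\}$ agrees with theirs. The gap is in your first step: the claim that the motivic hypothesis forces $\dim(q_v)_\an\in\{0,4\}$ at \emph{every} place is false at real places. Your argument that ``the only twist of a Rost motive whose split form is $\F_2(d-1)\oplus\F_2(d)$ is $R(\pi)(d-1)$ with $\pi$ a $2$-fold Pfister form'' correctly identifies the shape of an indecomposable $^v\!M$, but it does not pin down $\dim(q_v)_\an$; it only says that a $2$-fold Rost motive occurs in the excellent decomposition of $(q_v)_\an$. For instance, take $\dim q=12$ (so $d=5$) and a real place $v$ at which $q_v$ is definite. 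Then $(q_v)_\an$ has dimension $12$, and from $12=2^4-2^2$ one reads off $M(q_v)\simeq R_4\oplus R_4(1)\oplus R_4(2)\oplus R_4(3)\oplus R_2(4)\oplus R_2(5)$; the summand $R_2(4)$ has $\bar F_v$-form $\F_2(4)\oplus\F_2(5)=\F_2(d-1)\oplus\F_2(d)$, so the hypothesis is satisfied with $\dim(q_v)_\an=12$, not $4$.

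This invalidates your verification of the Hasse-invariant identity at real non-split places, which explicitly assumed $(q_v)_\an\simeq\pm\langle 1,1,1,1\rangle$. The paper repairs this by first showing (using an anisotropic place) that $\dim q\equiv 4\pmod 8$, and then that at every real non-split $v$ one has $\dim(q_v)_\an\equiv 4\pmod 8$ as well; consequently the number $m$ of negative entries satisfies $m=(\dim q\pm\dim(q_v)_\an)/2\equiv 0\pmod 4$, whence $\varepsilon_v(q)=(-1)^{m(m-1)/2}=1$. Note also that once $d\equiv 1\pmod 4$ is established, your correction factor $(-1,-1)_v^{(d-1)(d-2)/2}$ is identically~$1$, so your identity collapses to the paper's $\varepsilon_v(\pi^v)=\varepsilon_v(q)$. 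The remainder of your outline (Proposition~\ref{existence}, Hasse--Minkowski to see that $\pi$ represents~$1$, and the Pfister conclusion) is correct and matches the paper.
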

\begin{proof}
The proof is similar to that of  Proposition~\ref{odd-pi}. \\
Observe that for every finite place $v\in \Omega(F)$ the condition on $M(q_v)$ implies that $q_v$ is either split or $\dim (q_v)_\an=4$.
We define the family  $\mathcal{F}=\{ {\pi}^v \mid v \in \Omega(F)\}$ as follows:
$$ {\pi}^v = \left\{ \begin{array}{l}
\text{the split form $\langle 1,-1,1,-1 \rangle$ of dimension 4, if } q_v \text{ is split}; \\
\mbox{a (unique) anisotropic form of dimension } 4, \text{ if } q_v \text{ is not split and } v \text{ is finite};\\
\langle 1,1,1,1 \rangle, \text{ if } q_v \text{ is not split and } v \text{ is real.}
\end{array} \right.$$

The lemma below shows that the hypotheses of Proposition \ref{existence} are satisfied for the family $\mathcal{F}$.
It follows that there exists a quadratic form $\pi$ such that for every $v \in \Omega(F)$ we have ${\pi}_v \simeq {\pi}^v$. Note that $\pi$ represents $1$ and $d(\pi)=1$. Therefore, $\pi$ is a $2$-fold Pfister form and satisfies the conclusion of the proposition.
\end{proof}

\begin{lem}
Under the hypotheses of Proposition~\ref{even-pi} for every $v \in \Omega(F)$ we have ${\ve_v(q)=\ve_v({\pi}^v)}$.
\end{lem}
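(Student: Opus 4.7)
The plan is to mirror the structure of the proof of the lemma following Proposition~\ref{odd-pi}: first extract from the hypothesis a global congruence on $\dim q$ modulo $8$, then verify $\ve_v(q) = \ve_v(\pi^v)$ case by case.

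For the global congruence, pick a place $w$ at which $q_w$ is anisotropic, which exists since $q$ is anisotropic, see \eqref{hm}. Over $F_w$ anisotropic forms are excellent, so $q_w$ has a Pfister tower $\pi_0 \supset \cdots \supset \pi_r$ and the motivic decomposition \eqref{dec}. An indecomposable summand $R(\pi_j)(i)$ becomes $\F_2(i) \oplus \F_2(i + 2^{n_j-1} - 1)$ over $\Fbarv$, so the existence of a direct summand of $M(q_w)$ isomorphic to $\F_2(d-1) \oplus \F_2(d)$ over $\Fbarv$ forces $i = d-1$ and $n_j = 2$: some $\pi_j$ must be a $2$-fold Pfister form. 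Since $\dim q$ is even, every $n_i \geq 1$; if $n_j = 2$ occurred at some $j < r$, the strict decreasing condition along the tower together with the final strict inequality $n_{r-1} > n_r + 1$ would force the appearance of a $1$-fold (i.e.\ dimension-one) Pfister $\pi_{j'}$ for some $j' > j$ with $n_{j'} = 0$, contradicting the evenness of $\dim q$. Hence $j = r$ and $n_r = 2$; using $n_{r-1} \geq 4$ and the fact that earlier summands in $\dim q = \sum (-1)^i 2^{n_i}$ are divisible by $8$, we conclude $\dim q \equiv 4 \pmod 8$, i.e., $d \equiv 1 \pmod 4$. This is the main technical obstacle: the analogous congruence in Proposition~\ref{odd-pi} was immediate because the odd-dimensional tower terminates with $\pi_r = \langle 1\rangle$ absent from the decomposition, whereas here one must use the parity of $\dim q$ to exclude $2$-fold Pfisters from intermediate positions of the tower.

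With $d \equiv 1 \pmod 4$ in hand, the three cases defining $\pi^v$ reduce to Hasse-invariant computations. If $q_v$ is split, then $q_v \cong (d+1)\HH$, and the formula $\ve_v(m\HH) = (-1,-1)_v^{m(m-1)/2}$ combined with the odd parity of $d(d+1)/2$ yields $\ve_v(q_v) = (-1,-1)_v = \ve_v(\pi^v)$. If $v$ is finite and $q_v$ is not split, I first rule out $\dim (q_v)_\an = 2$: in that case $M((q_v)_\an)(d)$ would be an indecomposable rank-$2$ summand of $M(q_v)$ that becomes $\F_2(d) \oplus \F_2(d)$ over $\Fbarv$, incompatible with the hypothesis. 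Hence $(q_v)_\an$ is the unique anisotropic $4$-dimensional form over $F_v$, which equals $\pi^v$, and $q_v \cong (d-1)\HH \perp \pi^v$. Combining the multiplicativity of $\ve_v$ for orthogonal sums with Lemma~\ref{4dim} and the even parity of $(d-1)(d-2)/2$ then gives $\ve_v(q_v) = -(-1,-1)_v = \ve_v(\pi^v)$.

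In the real case with $q_v$ not split, let $(p, m)$ be the signature of $q_v$ with $p \geq m$ (the other case being symmetric); the Witt index is $m$ and $(q_v)_\an = (p - m)\langle 1\rangle$. The subcase $m = d$, where $\dim(q_v)_\an = 2$, is ruled out by the same indecomposability argument as in the finite case, so $m \leq d - 1$; the summand $\F_2(d-1) \oplus \F_2(d)$ of $M(q_v)_{\Fbarv}$ then comes entirely from $M((q_v)_\an)(m)$, which means $(q_v)_\an$ itself has a direct summand isomorphic over $\C$ to $\F_2(s-1) \oplus \F_2(s)$ with $s = d - m$. Applying the global-constraint argument to the excellent anisotropic form $(q_v)_\an$ over $\R$ yields $\dim(q_v)_\an = p - m \equiv 4 \pmod 8$. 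Combined with $p + m = 2d + 2 \equiv 4 \pmod 8$, this forces $m \equiv 0 \pmod 4$, whence formula~\eqref{Hasse-invariant-real} gives $\ve_v(q_v) = (-1)^{m(m-1)/2} = 1 = \ve_v(\pi^v)$.
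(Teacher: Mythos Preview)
Your proof is correct and follows the same structure as the paper's: extract $\dim q\equiv 4\pmod 8$ from an anisotropic place, then check the three cases; the only noteworthy difference is that in the finite non-split case the paper first proves $d(q)=1$ globally (from $\disc q_v=1$ for all $v$) and then argues that $q_v$ and the split form share dimension and determinant but are distinct, hence have opposite Hasse invariants, whereas you compute $\ve_v(q_v)$ directly from $q_v\simeq (d-1)\HH\perp\pi^v$. Two small slips to fix: a ``$1$-fold Pfister'' has dimension $2$, not $1$ (what you mean is $n_{j'}=0$, i.e.\ $\pi_{j'}=\langle 1\rangle$), and $\ve_v$ is not literally multiplicative on orthogonal sums---the cross term $(d(f),d(g))_v$ vanishes here only because both $(d-1)\HH$ and $\pi^v$ have trivial determinant.
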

\begin{proof}
Since $q$ is anisotropic over $F$, there exists a place $w \in \Omega(F)$ such that $q_w$ is anisotropic over $F_w$. Note that every anisotropic form over $F_w$ is
an excellent form. Therefore, the motivic decomposition of $q_w$ is given by formula~\eqref{dec}. The existence of the motivic direct summand of $M(q_w)$ as in the statement of Proposition~\ref{even-pi} implies that $\dim q$ is $4$ modulo $8$. In particular, $\disc(q)=d(q)$, where $\disc(q)$ denotes the discriminant of $q$  (recall that, by definition, ${\disc(q)=(-1)^{\frac{n(n-1)}{2}}d(q)}\in F^*/ {F^*}^2 $, where $n= \dim q$).

Note also that for every $v \in \Omega(F)$ by formula~\eqref{dec} the motive $M(q_v)$ does not have an indecomposable direct summand over $F_v$ which becomes $\F_2(d)\oplus\F_2(d)$ over $\Fbarv$.
Therefore, for every $v \in \Omega(F)$ we have $\disc(q_v)=1$ (see Section~\ref{excellent-dec}). Thus by the Hasse--Minkowski theorem in dimension $2$ we have  $\disc(q)=1$ and $d(q)=1$.

Let us now check case by case that for every $v \in \Omega(F)$ we have $\ve_v(q)=\ve_v({\pi}^v)$.\\

\noindent \textit{1st case: $v$ is such that $q_v$ is split.} In this case ${\pi}^v$ is split. Since $2d+2$ is $4$ modulo $8$, we have
$$ \ve_v(q)= (-1,-1)_v^{\frac{d(d+1)}{2}}=(-1,-1)_v=\ve_v({\pi}^v) \, . $$

\noindent \textit{2nd case: $v$ is finite and $\dim (q_v)_\an=4$.} By Lemma~\ref{4dim}, we have $\ve_v({\pi}^v) = -(-1,-1)_v$. Since a split quadratic form of dimension $2d+2$ has trivial determinant and the Hasse invariant $(-1,-1)_v$, the form $q_v$, which is of the same dimension and also has the trivial determinant, has the Hasse invariant $-(-1,-1)_v$. \\

\noindent \textit{3rd case: $v$ is real and $\dim (q_v)_\an>0$.} It is clear that $\ve_v({\pi}^v) = 1$.
Note that by the same argument as in the beginning of the proof we have $\dim (q_v)_\an$ is $4$ modulo $8$.
Let $m$ denote the number of minuses in the signature of $q_v$.

Depending on whether $(q_v)_\an$ is positive definite or not, we have $m = (\dim q_v \pm \dim (q_v)_\an)/2$. Thus, in any case $m$ is divisible by $4$. Then by formula~\eqref{Hasse-invariant-real} we have $\ve_v(q)= 1$.
\end{proof}

\section{Vishik's results}

In this section $F$ is an arbitrary field of characteristic not $2$.
We reformulate two Vishik's results for our convenience in order to use them in the proof of the main theorem.

\begin{prop} [\cite{vish-lens}, Theorem 5.1]
\label{vishik1}
Let $p = f \otimes \pi$ be a quadratic form over $F$, where $\pi$ is an $n$-fold Pfister form and $f$ is an odd-dimensional form. Then $M(\pi)(s)$ is a direct summand of $M(p)$, where $s = (\dim p - \dim \pi)/2$.
\end{prop}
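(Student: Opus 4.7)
The plan is to exhibit $M(\pi)(s)$ as a direct summand of $M(p)$ by constructing mutually inverse projectors and then invoking Rost nilpotence to pass from the split situation over an algebraic closure $\Fbar$ to the base field $F$. Scaling so that $f$ represents $1$ (which does not change the motive), I would write $f \simeq \langle 1 \rangle \perp f'$ with $\dim f'$ even, so that $p \simeq \pi \perp (f' \otimes \pi)$. This produces a closed embedding $\iota \colon Q_\pi \hookrightarrow Q_p$ of the corresponding smooth projective quadrics, realizing $Q_\pi$ as the intersection of $Q_p$ with a linear subspace of the ambient projective space of codimension $2s$.

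The first step is to produce candidate correspondences $\alpha \colon M(\pi)(s) \to M(p)$ and $\beta \colon M(p) \to M(\pi)(s)$. The graph $\Gamma_\iota$ and its transpose naturally yield morphisms of motives between $M(Q_\pi)$ and $M(Q_p)$, but of the wrong Tate shift for what is needed. To correct this, I would intersect these graphs with appropriate powers of the hyperplane class on $Q_p$ pulled back to $Q_\pi \times Q_p$ (resp.\ $Q_p \times Q_\pi$), producing cycles of the correct codimension.

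The second step is to verify $\beta \circ \alpha = \id_{M(\pi)(s)}$ after extension to $\Fbar$. Over $\Fbar$ both quadrics split and their motives decompose into explicit sums of Tate motives; moreover, one has the self-intersection formula $\iota^* \iota_*(x) = h^{2s} \cdot x$ on $\Ch^*(Q_\pi)$, where $h$ is the hyperplane class. A direct computation, using that $M(\pi)(s)_{\Fbar}$ sits as a contiguous range of Tate summands of $M(p)_{\Fbar}$ centered at the middle dimension, shows that $\beta \circ \alpha$ acts as the identity on this subsum.

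The hard part is descending this identity back to $F$: a priori $\beta \circ \alpha - \id$ vanishes only after extension of scalars. However, by Rost nilpotence for motives of quadrics, the kernel of $\End M(\pi)(s) \to \End(M(\pi)(s)_{\Fbar})$ is a nil-ideal, so a suitable power of $\beta \circ \alpha$ is a genuine idempotent over $F$. Composing with $\alpha$ then yields a projector in $\End M(p)$ cutting out the desired direct summand $M(\pi)(s) \subset M(p)$.
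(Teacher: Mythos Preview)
The paper gives no proof here; it simply quotes the result from \cite[Theorem~5.1]{vish-lens}. As for your plan, there is a genuine gap.

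The argument you outline never uses the hypothesis that $\pi$ is a Pfister form in any essential way: the only consequence you draw is that $\pi$ occurs as a subform of $p$ after scaling $f$, and this holds for arbitrary $\pi$. So the same argument would apply to any form $\pi$, yet the conclusion is false in that generality. For instance, take $F=\Q$ and $\pi=f=\langle 1,1,1\rangle$, so that $p=\langle 1\rangle^{\oplus 9}$ and $s=3$. Over $\R$ the form $p$ is anisotropic excellent with complete decomposition $M(p_\R)=R_4\oplus R_3(1)\oplus R_3(2)\oplus R_3(3)$ by formula~\eqref{dec} (here $R_k$ denotes the Rost motive of $\langle 1,1\rangle^{\otimes k}$); over $\C$ these four summands become $\F_2\oplus\F_2(7)$, $\F_2(1)\oplus\F_2(4)$, $\F_2(2)\oplus\F_2(5)$, $\F_2(3)\oplus\F_2(6)$ respectively. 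On the other hand $M(\pi)(3)_\R$ is indecomposable binary with $M(\pi)(3)_\C\simeq\F_2(3)\oplus\F_2(4)$, and by Krull--Schmidt this matches none of the four pieces above. Hence $M(\pi)(3)$ is not a direct summand of $M(p)$, so your scheme cannot be correct as stated.

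At the mechanical level the failure is visible in your step~2: any endomorphism of $M(Q_\pi)$ built from $\Gamma_\iota$, $\Gamma_\iota^t$, and powers of the hyperplane class acts on $\CH^*(Q_\pi)$ as multiplication by $h_\pi^{k}$ with $k\ge 2s$ (the self-intersection formula already contributes $h_\pi^{2s}$, and each extra hyperplane factor only raises the exponent). With $\F_2$-coefficients one has $h_\pi^{k}=0$ once $k\ge\dim\pi/2$, which happens whenever $\dim f\ge 3$; so the claimed equality $\beta\circ\alpha=\id$ over $\Fbar$ fails outright. Vishik's actual argument uses the Pfister hypothesis crucially --- in effect, that $p=f\otimes\pi$ becomes hyperbolic over the function field $F(Q_\pi)$, together with the oddness of $\dim f$ and the Arason--Pfister Hauptsatz to control the Witt index of $p$ from both sides. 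This produces the rational projectors that the purely geometric construction via the linear embedding cannot supply.
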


Let $R$ be a geometrically split motive over $F$ and $r$ an integer. We say that $R$ ``starts at $r$'', if $r$ is the minimal integer $i$ such that the Tate motive $\F_2(i)$ is a direct summand of $R_{\overline{F}}$.

\begin{prop}[\cite{vish-lens}, Theorem 4.17]
\label{vishik2}
Let $p$ and $q$ be two anisotropic quadratic forms over a field $F$.
Let $m_p$, $m_q$ be fixed non-negative integers.
Assume that $M(p)$ has an indecomposable direct summand of the form $R(m_p)$, where $R$ is a motive, which ``starts at $0$''.
If for all  field extensions $E/F$ we have
$$
i_W(q_E)>m_q \Longleftrightarrow i_W(p_E)>m_p\, ,
$$
where $i_W(q_E)$ and $i_W(p_E)$ are the corresponding Witt indices over $E$,
then $R(m_q)$ is a direct summand of $M(q)$.
\end{prop}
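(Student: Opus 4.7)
The proposition is due to Vishik; I would follow the strategy of \cite{vish-lens}. The plan is to transfer the idempotent projector defining $R(m_p)$ on $M(p)$ to an idempotent on $M(q)$ whose image is isomorphic to $R(m_q)$, using the hypothesis that the two Witt indices jump over exactly the same field extensions.

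Let $\pi_p \in \CH(p \times p)$ be an idempotent correspondence realizing the summand $R(m_p)$ of $M(p)$. The assumption that $R$ starts at $0$ means that over $\Fbar$ the image of $\pi_p$ contains the Tate summand $\F_2(m_p)$ as its lowest twist, so in the standard basis of $\CH(p_{\Fbar})$ its restriction involves the codimension-$m_p$ isotropic subspace class. I would then exploit the Witt-index correspondence to produce a rational cycle on $q \times q$ playing the analogous role on the $q$-side: over the generic point of the variety of $(m_p+1)$-dimensional totally isotropic subspaces of $p$, the form $p$ acquires Witt index $>m_p$, so by the hypothesis $q$ acquires Witt index $>m_q$ over the same field. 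This parallel jump in Witt indices, carried across the entire Knebusch splitting tower, yields matching rational cycles on $p$ and $q$ at corresponding levels. One then upgrades this data to an honest rational idempotent $\pi_q \in \CH(q \times q)$ via Vishik's lifting machinery, namely Rost nilpotence for quadrics combined with the symmetric operations modulo $2$.

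The main obstacle, and the real content of the proof, is the rationality step: showing that a cycle which is visibly rational over an appropriate function field in the splitting tower actually descends to $F$. The key input is that the obstructions to rationality of cycles in $\CH(q\times q)$ are controlled by the motivic decomposition of $q$, and the Witt-index equivalence matches these obstructions between $p$ and $q$ level by level. Once $\pi_q$ has been produced, its image contains $\F_2(m_q)$ as the lowest Tate summand over $\Fbar$, and the indecomposability of $R$ combined with Krull--Schmidt for Chow motives of quadrics forces this image to be isomorphic to $R(m_q)$, completing the argument.
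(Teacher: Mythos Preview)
The paper does not prove this proposition at all: it is stated with the citation \cite[Theorem 4.17]{vish-lens} and used as a black box in the proof of the main theorem. So there is no ``paper's own proof'' to compare against; the authors simply import Vishik's result.

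As for your sketch, it correctly identifies the circle of ideas (rational cycles on products of quadrics, the generic splitting tower, Rost nilpotence, Krull--Schmidt) underlying Vishik's work, but it remains an outline rather than a proof. In particular, the passage ``this parallel jump in Witt indices \ldots yields matching rational cycles on $p$ and $q$ at corresponding levels'' and ``Vishik's lifting machinery'' hide the entire technical content of \cite[Theorem 4.17]{vish-lens}; the construction of the correspondences $p\rightsquigarrow q$ and $q\rightsquigarrow p$ and the verification that their composites give the desired idempotent requires the specific combinatorics of shell structures developed in \cite[\S4]{vish-lens}. Also, the symmetric operations are not what is used here (they enter elsewhere in Vishik's work); the relevant input is his analysis of rational cycles via the splitting pattern. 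Your final step---that indecomposability of $R$ plus Krull--Schmidt forces the image to be $R(m_q)$---is not how Vishik concludes: he constructs mutually inverse morphisms between $R(m_q)$ and the summand of $M(q)$ directly. If you intend to supply a genuine proof rather than a pointer to the literature, these gaps would need to be filled; otherwise, citing \cite[Theorem 4.17]{vish-lens} as the paper does is the honest option.
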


\section{Proof of the main theorem and its applications}
\label{s:proof-main-theorem}

We say that a motive $N$ over a field $F$ is binary if it becomes isomorphic to a direct sum of two Tate motives over $\overline{F}$. We say that $N$ is a binary split motive if it is a direct sum of two Tate motives over $F$.

\begin{thm}\label{maintheorem}
Let $q$ be a quadratic form over a number field $F$. Let $N$ be a binary split motive over $F$. Assume that for every place $v \in \Omega(F)$ there exists a direct motivic summand $^{v \!}M$ of $M(q_v)$ such that $^{v \!}M_{\Fbarv} \simeq  N_{\Fbarv}$. Then there exists a direct motivic summand $M$ of $M(q)$ such that $M_{\overline{F}} \simeq N_{\overline{F}}$.
\end{thm}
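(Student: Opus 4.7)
The plan is to realise the desired direct summand of $M(q)$ as a shifted Rost motive $R(\pi)(a)$ of an $n$-fold Pfister form $\pi$ constructed globally from the local data, and then apply Vishik's criterion (Proposition~\ref{vishik2}) to upgrade local isotropy into a motivic decomposition. I first reduce to the anisotropic case: by \eqref{isotr} together with the Hasse--Minkowski equality $i_W(q)=\min_{v\in\Omega(F)} i_W(q_v)$ (a consequence of \eqref{hm}), the Witt-index Tate summands of $M(q)$ split off and absorb any portion of $N_{\overline F}$ that already lies in the Tate range of $M(q)$ over $F$. Writing $N_{\overline F}\simeq \F_2(a)\oplus \F_2(b)$ with $a\le b$ and setting $d=b-a$, I inspect the excellent decomposition \eqref{dec} of $M(q_v)$ at places where $q_v$ is not split: since the only indecomposable binary summands there are Rost motives of Pfister forms, whose twists over $\overline F_v$ have the shape $\F_2(j)\oplus \F_2(j+2^{n-1}-1)$, either $d=2^{n-1}-1$ for some $n\ge 1$, or $\upv M$ must at every place be a pair of Tate summands of $M(q_v)$. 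The latter configuration is handled directly using the min formula for $i_W(q)$ and a combinatorial comparison of Tate ranges.

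In the main case I build a global $n$-fold Pfister form $\pi$ over $F$ whose splitting pattern matches the position-$a$ isotropy behaviour of $q$: $\pi$ splits at exactly those places $v$ where $q$ acquires Witt index strictly greater than $a$. For $n=2$ this is essentially the content of Propositions~\ref{odd-pi} and \ref{even-pi}, after translating the hypothesis so that position $a$ of $M(q)$ plays the role of the ``middle'' summand $\F_2(d-1)\oplus\F_2(d)$ treated there. For $n=1$ a binary form carrying the discriminant information of $q$ does the job. For $n\ge 3$, every $n$-fold Pfister form is automatically hyperbolic over non-Archimedean completions, so $\pi$ is essentially determined by its behaviour at the real places and is constructed directly via Proposition~\ref{existence} together with the Hasse-invariant analysis of Section~3.

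With $\pi$ in hand I apply Proposition~\ref{vishik2} with $p=\pi$, $R=R(\pi)$ (which starts at $0$ and is an indecomposable summand of $M(\pi)$), $m_p=0$ and $m_q=a$. The substantive remaining content is then the biconditional
\[
i_W(q_E) > a \iff i_W(\pi_E) > 0
\]
for every field extension $E/F$; once this is verified, Proposition~\ref{vishik2} produces $R(\pi)(a)$ as a direct summand of $M(q)$, and over $\overline F$ one has $R(\pi)(a)_{\overline F}\simeq \F_2(a)\oplus \F_2(a+2^{n-1}-1)=N_{\overline F}$, as required.

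\textbf{Main obstacle.} The hardest step is the verification of the above biconditional over \emph{every} field extension $E/F$, while the hypothesis of the theorem supplies information only at the completions of $F$. Bridging this gap relies on the rigidity of Pfister forms (for which isotropy is equivalent to hyperbolicity) together with the tight local-global construction of $\pi$ from Section~3; the fact that $\pi$ is built out of Hilbert-symbol data that matches $q$ on the nose at every place is what ultimately forces the splitting of $\pi_E$ to be controlled by the isotropy of $q_E$ over arbitrary extensions, not just completions of $F$.
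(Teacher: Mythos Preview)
Your high-level strategy---construct an $n$-fold Pfister form $\pi$ with the correct local splitting behaviour and then invoke Proposition~\ref{vishik2}---is the same as the paper's. The gap is precisely in the step you flag as the ``main obstacle'': you propose to apply Proposition~\ref{vishik2} with $p=\pi$ itself and $m_p=0$, and to verify
\[
i_W(q_E)>a \iff i_W(\pi_E)>0
\]
for \emph{every} extension $E/F$. But no argument is given, and the heuristic that ``$\pi$ is built out of Hilbert-symbol data that matches $q$ at every place'' only ties $\pi$ and $q$ together over completions of $F$, not over arbitrary (possibly transcendental) extensions. There is no Witt-ring relation between $q$ and $\pi$, so knowing $\pi_E$ is hyperbolic tells you nothing a priori about $(q_E)_\an$, and conversely.

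The paper closes this gap by an additional construction you have omitted: it does not apply Proposition~\ref{vishik2} to $\pi$, but to an auxiliary form $p=f\otimes\pi$ with $f$ odd-dimensional, using Proposition~\ref{vishik1} to exhibit $R(\pi)(s)$ inside $M(p)$ with $s=(\dim p-\dim\pi)/2$. The form $f$ is built (via weak approximation at the real places in $\Omega_2(F)$) so that at every $v\in\Omega_2(F)$ one has $\dim(p_v)_\an=Q(v)$ and $\dim(q_v-p_v)_\an=A(v)$; see properties~\eqref{prop2} and~\eqref{prop3}. These force $q-p$ to be small in the Witt ring of $F$ (controlled via~\eqref{hm}), and then both directions of the biconditional $i_W(q_E)>t\Leftrightarrow i_W(p_E)>s$ are obtained by writing $q_E=(q-p)_E+p_E$ (resp.\ $p_E=(p-q)_E+q_E$) in $W(E)$ and bounding anisotropic dimensions. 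With $f=\langle 1\rangle$ (i.e.\ your choice $p=\pi$), property~\eqref{prop2} fails whenever the relevant Rost summand is not the outermost one in the excellent decomposition of $(q_v)_\an$, so the paper's argument does not go through; and no alternative route is supplied. The construction of $f$ is the missing idea.
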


\begin{rem}
It follows from \cite[Theorem~3.11]{vish-lens} that
the Krull--Schmidt principle holds for the Chow motives of quadrics. Thus, the motive $M$ is isomorphic to $\upv M$ over $F_v$ for all $v$.
\end{rem}

\begin{proof}
If the quadratic form $q$ is isotropic, then using formula~\eqref{isotr} we can reduce to the anisotropic part of $q$. Therefore, without loss of generality we may assume that the form $q$ is anisotropic.
We denote by $\Omega_{2}(F)$ (resp. $\Omega_{1+1}(F)$ ) the subset of $\Omega(F)$ consisting of the places $v$ such that $^vM$ is indecomposable (resp. $^vM$ is split).

Note that for every $v \in \Omega(F)$ the form $q_v$ as well as its anisotropic part $(q_v)_\an$ is
an excellent form. Throughout the proof of the theorem we denote simply  by $n_j(v)$, $m_j(v)$ and $r(v)$ respectively the numbers $n_j((q_v)_\an)$, $m_j((q_v)_\an)$ and $r((q_v)_\an)$ from Section~\ref{excellent-dec} corresponding to the motivic decomposition of the anisotropic excellent form $(q_v)_\an$.

Since $q$ is anisotropic, the set $\Omega_{2}(F)$ is not empty. Let $v \in \Omega_{2}(F)$. It follows from decomposition~\eqref{dec} and the Krull--Schmidt principle for the motives of quadrics (see \cite{vish-lens}, \cite{ChM06}) that $^{v \!}M \simeq {^v\!R}(t)$, where $t \geq 0$ and ${^v\!R}$ is the Rost motive of some anisotropic $n$-fold Pfister form over $F_v$, $n \geq 1$. Note that $t$ and $n$ do not depend on $v \in \Omega_{2}(F)$. Indeed, this follows from the hypothesis of the theorem, namely we have    $^{v \!}M_{\Fbarv} \simeq ^{w \!\! \!}M_{\overline{F}\!_w}$ for every $v, w \in \Omega_{2}(F)$.

It follows that for every $v\in \Omega_{2}(F)$ the summand $ \pm 2^n$ is presented in the decomposition of $\dim \,(q_v)_\an$ into an alternating sum of powers of $2$,
$\dim \,(q_v)_\an = \sum_{i=0}^{r(v)}(-1)^i 2^{n_i(v)}.$
We denote by $k(v)$ the index satisfying the equality $n_{k(v)}=n$ in this decomposition. We also define
$$Q(v):=\sum_{i=0}^{k(v)} (-1)^i 2^{n_i(v)} , \quad A(v):=\sum_{i=k(v)+1}^{r(v)} (-1)^{i-k(v)-1} 2^{n_i(v)}$$
i.e. $Q(v)= 2^{n_0}-2^{n_1} +  \ldots \pm 2^n $ is a part (up to $2^n$) of the decomposition of $\dim \,(q_v)_\an$ and
${A(v)= | \dim \,(q_v)_\an - Q(v)|}$. Note that in terms of the notation of Section~\ref{excellent-dec} we also have $Q(v) = \dim \, [(q_v)_\an]_{k(v)}$.

From now on we assume that $n>1$, and we shall consider the case $n=1$ at the very end of the proof.

In order to prove the theorem we shall apply Proposition~\ref{vishik1} to the quadratic form $q$ and a suitable form $p$.
The form $p$ will be constructed below as a product $f \otimes \pi$, where $\pi$ is an $n$-fold Pfister form, and will satisfy the following properties:  \\
1. For every place $v \in \Omega(F)$:
\begin{equation}
\label{prop1}
  {\pi}_v \, \text{is split} \quad \Longleftrightarrow \quad ^{v \!}M \, \text{is split}. \,
\end{equation}
\noindent 2.
For every place $v \in \Omega_{2}(F)$ the following two equalities hold:

\begin{equation}
\label{prop2}
\dim (p_v)_\an =  Q(v) \,
\end{equation}
\noindent and
\begin{equation}
\label{prop3}
    \dim (q_v -p_v)_\an = \, | \dim (q_v)_\an - \dim(p_v)_\an  | \, = A(v) \, .
\end{equation}

\begin{lem} {\bf (Construction of $\pi$)}
There exists an $n$-fold Pfister form over $F$, which satisfies Property~\eqref{prop1} for every $v \in \Omega(F)$.
\end{lem}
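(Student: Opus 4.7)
The plan is to construct $\pi$ via a local-to-global argument, specifying a family $\{\pi^v\}_{v\in\Omega(F)}$ of local $n$-fold Pfister forms with the prescribed splitting behaviour and then invoking Proposition~\ref{existence}. Set $\pi^v$ to be the hyperbolic $n$-fold Pfister form over $F_v$ whenever $v\in\Omega_{1+1}(F)$; at $v\in\Omega_2(F)$ let $\pi^v$ be an anisotropic $n$-fold Pfister form over $F_v$. Such a local choice exists and is unique up to isomorphism: at a non-archimedean $F_v$ it is the (unique) anisotropic $4$-dimensional Pfister form attached to the quaternion division algebra, and at a real $F_v$ it is the positive-definite form $2^n\langle 1\rangle$.

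For $n\geq 3$ the construction simplifies substantially. Since anisotropic quadratic forms over non-archimedean local fields have dimension at most $4$, every $n$-fold Pfister form with $n\geq 3$ is automatically hyperbolic at every finite place, so $\Omega_2(F)$ is contained in the set of real places. Using weak approximation on signs in $F^{\times}$, choose $a\in F^{\times}$ that is positive at every $v\in\Omega_2(F)$ and negative at every real $v\in\Omega_{1+1}(F)$; then $\pi := \langle\langle a, 1, \ldots, 1\rangle\rangle$ (with $n-1$ trailing $1$'s) is positive definite at $v\in\Omega_2(F)$, hyperbolic at the remaining real places (because the factor $\langle 1,a\rangle$ is already hyperbolic), and hyperbolic at every finite place, which is exactly Property~\eqref{prop1}.

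For $n=2$ apply Proposition~\ref{existence} to the family $\{\pi^v\}$. Condition~(1) holds with $c=1$ because all $\pi^v$ have trivial discriminant, and condition~(2) follows because $q$ is globally defined, hence $q_v$, and consequently $\pi^v$, is split at almost all $v$, so $\ve_v(\pi^v) = (-1,-1)_v$ is trivial almost everywhere. A direct computation yields $\ve_v(\pi^v) = (-1,-1)_v$ for $v\in\Omega_{1+1}(F)$, $\ve_v(\pi^v) = -(-1,-1)_v$ for finite $v\in\Omega_2(F)$ (by Lemma~\ref{4dim}), and $\ve_v(\pi^v) = 1$ for real $v\in\Omega_2(F)$. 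The reciprocity condition~(3), $\prod_v \ve_v(\pi^v) = 1$, is then verified by a case analysis paralleling the lemmas accompanying Propositions~\ref{odd-pi} and~\ref{even-pi}: the existence of the binary summand $\upv M$ with adjacent Tate shifts imposes a dimension restriction on $q_v$ modulo $8$, so that $\ve_v(q_v)$ can be computed in terms of $(-1,-1)_v$ and identified with $\ve_v(\pi^v)$ at each $v$; the global product formula $\prod_v\ve_v(q_v) = 1$ combined with Hilbert reciprocity then yields the desired identity.

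The main obstacle is the reciprocity verification in the $n=2$ case. Whereas the earlier propositions partition $\Omega(F)$ according to whether $q_v$ itself splits, our partition distinguishes instead whether the specific summand $\upv M$ is split. At a place where $q_v$ is isotropic but not split, the Witt index of $q_v$ is at least $2$ and $\upv M$ may come entirely from the Tate part of the isotropic decomposition~\eqref{isotr} while $q_v$ still carries non-trivial Rost summands. Extending the Hasse-invariant case analysis to cover such places, and either maintaining the pointwise identity $\ve_v(\pi^v)=\ve_v(q_v)$ or introducing a compensating correction, is the heart of the argument.
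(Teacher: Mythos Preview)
Your treatment of the case $n\ge 3$ is correct and essentially identical to the paper's.

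For $n=2$ you have overlooked the key simplifying observation that the paper makes, and as a result you are trying to carry out a harder verification than is needed. When $n=2$ the split binary motive $N$ has the shape $\F_2(t)\oplus\F_2(t+1)$ with $t$ forced to be central: $t=d-1$ for $\dim q=2d+1$, and $t\in\{d-1,d\}$ for $\dim q=2d+2$ (the second case reducing to the first by the symmetry of decomposition~\eqref{dec}). Because these shifts occupy the innermost pair of positions in the motive of the quadric, the Tate motives $\F_2(t)$ and $\F_2(t+1)$ can be direct summands of $M(q_v)$ only when $q_v$ is \emph{fully} split. Hence $\Omega_{1+1}(F)$ coincides with the set of places at which $q_v$ splits, and the required $2$-fold Pfister form is supplied directly by Propositions~\ref{odd-pi} and~\ref{even-pi}.

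This means that the scenario you flagged in your last paragraph, a place where $q_v$ is isotropic but not split while $\upv M$ is split, simply does not arise for $n=2$. Your ``main obstacle'' is a non-obstacle, and once you make the observation above your plan collapses to exactly the paper's argument: the reciprocity check is already packaged inside those two propositions, so there is no need to redo the Hasse-invariant case analysis here. As written, your proposal leaves the $n=2$ case as an unfinished sketch; adding the sentence ``for $n=2$, $\upv M$ is split if and only if $q_v$ is split'' (with the one-line justification above) turns it into a complete proof.
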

\begin{proof}
First, assume that $n=2$. Then for every $v \in \Omega(F)$ the motive $^{v \!}M$ is split if and only if the form $q_v$ is split.
If $\dim q =2d+1$, then $N \simeq \F_2(d-1) \oplus \F_2(d)$\ and the result follows from Proposition~\ref{odd-pi}.

Now consider the case $\dim q=2d +2$. Denote by $S$ the motive $\F_2(d-1) \oplus \F_2(d)$. Then either $N \simeq S$ or $N \simeq S(1)$.
In the first case the conclusion of the lemma follows from Proposition \ref{even-pi} and the second case can be reduced to the first one.
Indeed, if $N \simeq S(1)$, then it follows from the motivic decomposition of the excellent form $q_v$ (see Section~\ref{excellent-dec}) that for every $v\in \Omega(F)$ the motive $^v\!M(-1)$ is also a direct motivic summand of $q_v$.

Assume that $n > 2$. By the Weak Approximation Theorem (see Cassels \cite[Section 15]{Ca67} for the Strong Approximation Theorem, which is stronger),
there exists $a \in F$ such that the following property holds for every $v \in \Omega_{\R}(F)$, where $ \Omega_{\R}(F)$ is the set of all real places of $F$:
$$  {a}_v < 0 \quad \Longleftrightarrow \quad ^{v \!}M \, \text{is split}. \, $$
\noindent Consider the $n$-fold Pfister form $\pi = \big \langle 1, a  \big \rangle \otimes \big \langle 1, 1  \big \rangle \otimes \ldots \otimes \big \langle 1, 1  \big \rangle$. We claim that $\pi$ satisfies Property~\eqref{prop1} for every $v \in \Omega(F)$. Indeed, if $v \in \Omega(F)$ is finite or complex, then the form ${\pi}_v$ and the motive $ ^{v \!}M$ are both split.
Thus, in this case Property~\eqref{prop1} holds. Moreover, by the construction of $a$ the form $\pi$ satisfies Property~\eqref{prop1} for every real place $v \in \Omega(F)$.
\end{proof}

\begin{lem} {\bf (Construction of $f$)}
There exists a quadratic form $f$ over $F$ such that for every place $v \in \Omega_{2}(F)$ the form $p:= f \otimes \pi$ satisfies Properties~\eqref{prop2} and \eqref{prop3}.
\end{lem}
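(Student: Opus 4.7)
The plan is to construct $f$ by prescribing its local isomorphism class at every place $v\in\Omega(F)$ and then applying Proposition~\ref{existence} to realize these local data by a single global quadratic form. The key observation is that at each $v\in\Omega_2(F)$ the anisotropic excellent form $(q_v)_\an$ admits a canonical ``initial segment'' of dimension $Q(v)$ which is divisible by $\pi_v$; the desired form $f^v$ will be, up to hyperbolic padding, the quotient of this segment by $\pi_v$, while at the remaining places $f^v$ is free to choose.

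More precisely, at a place $v\in\Omega_2(F)$, Property~\eqref{prop1} says that $\pi_v$ is an anisotropic $n$-fold Pfister form. Over any local field, for $n\ge 2$ there is at most one anisotropic $n$-fold Pfister form up to isomorphism (for real $v$ it is the positive definite form $\langle 1,\ldots,1\rangle$; for nonarchimedean $v$ such a form exists only for $n=2$, where it is the norm form of the unique quaternion division algebra). Hence $\pi_v\simeq\pi_{k(v)}^v$. Let $p^v$ denote the anisotropic excellent form whose Pfister tower is the truncation $\pi_0^v\supset\ldots\supset\pi_{k(v)}^v$; then $\dim p^v=Q(v)$, and $p^v$ is divisible by $\pi_v$, so one may write $p^v\simeq g^v\otimes\pi_v$ with $\dim g^v=Q(v)/2^n$. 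Since $n_i(v)>n$ for every $i<k(v)$, the integer $Q(v)/2^n$ is odd. Fix an odd integer $d_f\ge\max_{v\in\Omega_2(F)} Q(v)/2^n$ and at each $v\in\Omega_2(F)$ define $f^v$ by padding $g^v$ with $(d_f-Q(v)/2^n)/2$ hyperbolic planes; because $\pi_v$ is a Pfister form, this padding leaves the anisotropic part of $f^v\otimes\pi_v$ equal to $p^v$. At places $v\notin\Omega_2(F)$ the form $\pi_v$ is split, so $f^v\otimes\pi_v$ is hyperbolic regardless of $f^v$, and the choice of $f^v$ (subject to dimension $d_f$) will be used to satisfy the global realizability conditions.

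The main obstacle is the verification of conditions~(1)--(3) of Proposition~\ref{existence} for the family $\{f^v\}_{v\in\Omega(F)}$. For condition~(1), at each $v\in\Omega_2(F)$ both $\pi_v$ and the even-dimensional excellent form $p^v$ have trivial determinant (for $n\ge 2$ an $n$-fold Pfister form has trivial discriminant, and $Q(v)\equiv 0\pmod{4}$), whence $d(f^v)=1$; one arranges the remaining $f^v$ to have trivial determinant too. Condition~(2) holds because the excellent decompositions of $(q_v)_\an$ are nontrivial at only finitely many $v$, as $q$ is a global form. The crux is the product formula~(3), $\prod_v\ve_v(f^v)=1$: by expressing each $\ve_v(f^v)$ through the standard tensor-product formula in terms of $\ve_v(q_v)$, $\ve_v(\pi_v)$, and the Hilbert symbols appearing in the excellent decomposition of $(q_v)_\an$, the product over all $v$ reduces to a combination of Hilbert reciprocity identities applied to the globally defined forms $q$ and $\pi$; any residual discrepancy can be absorbed by flipping the Hasse invariant of $f^v$ at a single auxiliary split place in $\Omega_{1+1}(F)$. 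With such $f$ at hand, the form $p:=f\otimes\pi$ satisfies Property~\eqref{prop2} by construction at every $v\in\Omega_2(F)$, while Property~\eqref{prop3} follows from the structural fact that the Witt complement of the initial excellent segment inside $(q_v)_\an$ is exactly the tail excellent segment of dimension $A(v)$.
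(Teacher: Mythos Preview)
Your local-to-global approach via Proposition~\ref{existence} is genuinely different from the paper's, which constructs $f$ directly over $F$ using only the Weak Approximation Theorem (choosing scalars $a_i$, $b$, $\alpha\in F$ with prescribed signs at the real places in $\Omega_{2}(F)$ and assembling $f=\alpha(f_1\perp\ldots\perp f_l\perp\langle b\rangle)_\an$). Your route is viable in principle, but as written two steps fail.

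The serious one concerns Property~\eqref{prop3}. Your $p^v$, ``the anisotropic excellent form whose Pfister tower is the truncation $\pi_0^v\supset\ldots\supset\pi_{k(v)}^v$'', is over a real place the \emph{positive} definite form $\langle 1\rangle^{Q(v)}$. But $(q_v)_\an$ may well be negative definite; then in the Witt ring $q_v-p_v=-\langle 1\rangle^{\dim(q_v)_\an}-\langle 1\rangle^{Q(v)}$, so $\dim(q_v-p_v)_\an=\dim(q_v)_\an+Q(v)\neq A(v)$ and \eqref{prop3} fails outright. The paper handles exactly this with the global scalar $\alpha$, chosen so that $\alpha_v>0$ iff $(q_v)_\an$ is positive definite; you must likewise build the correct sign into $g^v$. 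Secondly, your inference ``$d(p^v)=d(\pi_v)=1$, whence $d(f^v)=1$'' is invalid: since $\dim\pi_v=2^n$ is even, one has $d(g^v\otimes\pi_v)=d(g^v)^{2^n}\cdot d(\pi_v)^{\dim g^v}=1$ regardless of $d(g^v)$, so $d(p^v)=1$ says nothing about $d(g^v)$. After the sign fix, over real $v$ one has $g^v=\pm\langle1\rangle^{Q(v)/2^n}$ with $Q(v)/2^n$ odd, so $d(g^v)=\pm1$ depending on the definiteness of $(q_v)_\an$, and after hyperbolic padding $d(f^v)$ varies with $v$. Condition~(1) of Proposition~\ref{existence} can still be met by choosing $c\in F^*$ via weak approximation for square classes, and your ``flip at an auxiliary place in $\Omega_{1+1}(F)$'' does handle condition~(3) when $d_f\ge 3$; but these repairs are essential, and the claim $d(f^v)=1$ must be dropped.
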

\begin{proof}
Let $\Omega_{2, \R}(F) = \{v_1, \ldots , v_l\}$ be the set of all real places $v \in \Omega(F)$ such that $^{v \!}M$ is indecomposable, i.e. $\Omega_{2, \R}(F) = \Omega_{2}(F) \cap \Omega_{\R}(F)$. For every $i=1,\ldots, l$ we construct a quadratic form $f_i$ over $F$ as follows.\\
We fix an integer $i \in [1,l] $ and denote $v_i$ simply by $v$. Since $v \in \Omega_{2}(F) $, the power $\pm 2^n$ appears in the alternating sum
$$\dim \,(q_{v})_\an = \sum_{j=0}^{r(v)} (-1)^j 2^{n_j(v)}\, .$$
\noindent Note that $Q(v)= \sum_{j=0}^{k(v)} (-1)^j 2^{n_j(v)}$, where $n_{k(v)}(v)=n$,
is divisible by $2^n = \dim \, \pi$. We construct $f_i$ in the form $\big \langle 1, a_i  \big \rangle \otimes \big \langle 1, \ldots , 1  \big \rangle$, $a_i \in F$ such that the following equality holds:
\begin{equation}
\label{f_i}
(\dim \, f_i + (-1)^{k(v)}) (\dim \pi) = Q(v) \, .
\end{equation}
\noindent
By the Weak Approximation Theorem we can choose $a_i \in F$ such that for every $j=1, \ldots, l$ the following property holds:
$$(a_i)_{v_j} > 0 \quad \Longleftrightarrow \quad i=j  \, . $$
We define $f:= \alpha (f_1 \perp \ldots \perp f_l \perp \langle b \rangle)_\an $, where
$\alpha, b \in F$ satisfy respectively the following properties for every $v\in \Omega_{2, \R}(F)$\ :
$$\sgn b_{v} = \sgn (-1)^{k(v)} \, .$$
\noindent and
$$\quad \quad \quad \alpha_v > 0 \quad \Longleftrightarrow \quad (q_v)_\an \text{ is positive definite}. $$
Note that the existence of $\alpha$ and $b$ is once again guaranteed by the Weak Approximation Theorem.

We claim that $p=f \otimes \pi$ satisfies Properties~\eqref{prop2} and \eqref{prop3} for every place $v \in \Omega_{2}(F)$.

Indeed, let $v=v_i \in \Omega_{2,\R}(F) $. Note that all forms $f_j$, $j\in \{1, \ldots ,l\} $, $j \neq i$, become hyperbolic over $v$. Thus, we have in the Witt ring $W(F_v)$:
$$p_v = \alpha_{v}( (f_i)_v + \langle b_v \rangle)({\pi}_v) = \pm (( \dim f_i){\langle 1 \rangle} + {\langle (-1)^{k(v)} \rangle})({\pi}_v) \, .$$
It follows from formula~\eqref{f_i} that $\dim \,(p_v)_\an = Q(v)$. By the construction of $\alpha$, equality~\eqref{prop3} also holds.

Assume now that there is a finite place $v \in \Omega_{2}(F)$. It is possible only if $n=2$.
Since the dimension of $f$ is odd and $\pi_v$ is anisotropic, we have $(p_v)_\an = (f_v \otimes \pi_v)_\an \simeq \pi_v$. We also have $\dim (q_v)_\an = 4$, $Q(v)=4$, $A(v)=0$ if $\dim q$ is even, and $\dim (q_v)_\an = 3$, $Q(v)=4$, $A(v)=1$ if $\dim q$ is odd. In both cases the equality $\dim(p_v)_\an= Q(v)$ holds.

Recall that over $F_v$ there is a unique anisotropic quadratic form of dimension $4$. Thus, this form is $\pi_v$. Therefore, we obtain $(q_v)_\an \simeq \pi_v$ if $q$ is even-dimensional and $(q_v)_\an$ is a subform of $\pi_v$ if $q$ is odd-dimensional.
In both cases we obtain $\dim (p_v-q_v)_\an= A(v)$. It follows that the form $p=f \otimes \pi$ satisfies Properties~\eqref{prop2} and  \eqref{prop3}  for every finite place $v \in \Omega_{2}(F)$.
\end{proof}
By Proposition~\ref{vishik1} $M(\pi)(s)$ is a direct summand of $M(p)$, where $s = (\dim p - \dim \pi)/2$. Let $R$ be the Rost motive of $\pi$. Then $R(s)$ is also a direct summand of $M(p)$.

Recall that ${^v\!R}(t)_{\Fbarv} \simeq N_{\Fbarv}$ for every place $v \in \Omega_{2}(F)$. Since $R$ and ${^v\!R}$ are both Rost motives of some anisotropic $n$-fold Pfister forms, we also have  $ R(t)_{\overline{F}} \simeq N_{\overline{F}}$.
In order to complete the proof (in the case $n>1$), we shall show that $R(t)$ is a direct summand of $M(q)$. By Proposition~\ref{vishik2} this follows from the lemma below.

\begin{lem}
For every field extension $E/F$ the following equivalence holds:
$$i_W(q_E) > t \quad \Longleftrightarrow \quad  i_W(p_E) > s \, , $$
where $i_W(q_E)$ and $i_W(p_E)$ are the corresponding Witt indices over $E$.
\end{lem}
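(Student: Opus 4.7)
The plan is to show that both Witt-index inequalities are equivalent to a single condition, namely that $\pi_E$ is hyperbolic.

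For the form $p$, I want to show $i_W(p_E) > s$ if and only if $\pi_E$ is hyperbolic. If $\pi_E$ is hyperbolic, then $p_E = f_E \otimes \pi_E$ is hyperbolic, so $i_W(p_E) = \dim p / 2 = s + 2^{n-1} > s$. Conversely, if $\pi_E$ is anisotropic, then, since $\dim f$ is odd, in the Witt ring $W(E)$ we have $[p_E] \equiv [\pi_E] \pmod{I^{n+1}(E)}$, which is nonzero in $I^n(E)/I^{n+1}(E)$. By the Arason--Pfister Hauptsatz, $\dim (p_E)_\an \geq 2^n$, which gives $i_W(p_E) \leq (\dim p - \dim \pi)/2 = s$.

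For the form $q$, I want to show $i_W(q_E) > t$ if and only if $\pi_E$ is hyperbolic. The central observation is that at every $v \in \Omega_2(F)$ the Pfister form $\pi_v$ is isomorphic to the underlying Pfister form ${}^v\pi$ of the Rost motive ${}^vR$. This holds because when $v$ is finite one must have $n = 2$ and the anisotropic $2$-fold Pfister form over $F_v$ is unique, while when $v$ is real the unique anisotropic $n$-fold Pfister form over $F_v = \R$ is the positive-definite $\langle 1,1\rangle^{\otimes n}$. Combined with the constructed equalities~\eqref{prop2} and~\eqref{prop3}, over each such $v$ the form $p_v$ matches, up to Witt equivalence, the truncated excellent piece $[(q_v)_\an]_{k(v)}$ of $(q_v)_\an$, so the difference $[q_v] - [p_v]$ is controlled by the residual Pfister tower $\pi_{k(v)+1}, \ldots, \pi_{r(v)}$. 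Using this alignment: in the direction $(\Leftarrow)$, if $\pi_E$ (and hence $p_E$) is hyperbolic, then the splitting of $R(s)_E$ in $M(p_E)$ forces a corresponding splitting of $\F_2(t)$ off $M(q_E)$; in the direction $(\Rightarrow)$, if $\pi_E$ were anisotropic then by the $p$-side analysis $i_W(p_E) \leq s$, and the local Krull--Schmidt structure forces $i_W(q_E) \leq t$ as well.

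The main obstacle will be to carry the second equivalence through for an \emph{arbitrary} field extension $E/F$ rather than only for local completions $F_v$, since local-global principles like Hasse--Minkowski do not apply to a general $E$. The strategy is to use $\pi$, which was constructed precisely to match $q_v$'s invariants at every $v \in \Omega_2(F)$, as a global bridge: the Tate-summand behavior of $M(p_E)$ is entirely governed by $\pi_E$ by the first step, and the dimension equalities~\eqref{prop2} and~\eqref{prop3} transfer this behavior to $M(q_E)$ uniformly in $E$.
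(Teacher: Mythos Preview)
Your first step, the equivalence $i_W(p_E)>s \Leftrightarrow \pi_E$ hyperbolic, is correct and is essentially what the paper uses implicitly (the paper deduces $\pi_E$ hyperbolic from $i_W(p_E)>s$ via the Rost summand $R(s)$, and gets $\dim(p_E)_\an<2^n$ in the converse direction by a direct estimate rather than Arason--Pfister, but your argument is fine and arguably cleaner).

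The second step, however, is where the content of the lemma lies, and your proposal does not prove it. The sentences ``the splitting of $R(s)_E$ in $M(p_E)$ forces a corresponding splitting of $\F_2(t)$ off $M(q_E)$'' and ``the local Krull--Schmidt structure forces $i_W(q_E)\le t$'' are not arguments: over an arbitrary extension $E/F$ there is no a priori link between $M(p_E)$ and $M(q_E)$ (that link is precisely what this lemma, together with Proposition~\ref{vishik2}, is meant to establish), and Krull--Schmidt over the $F_v$ says nothing about $E$. You correctly flag this as ``the main obstacle'' in your last paragraph, but then do not overcome it; the phrase ``the dimension equalities~\eqref{prop2} and~\eqref{prop3} transfer this behavior to $M(q_E)$ uniformly in $E$'' is a restatement of the goal, not a proof.

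The missing idea is to pass through the auxiliary form $q-p$ over the number field $F$. In the direction $(\Leftarrow)$: once $p_E$ is hyperbolic one has $(q_E)_\an\simeq((q-p)_E)_\an$, hence $i_W(q_E)\ge(\dim q-\dim(q-p)_\an)/2$; now $\dim(q-p)_\an$ is computed \emph{over $F$}, so by Hasse--Minkowski~\eqref{hm} it equals $\max_v\dim(q_v-p_v)_\an$, and one checks place by place (using \eqref{prop2}, \eqref{prop3}, \eqref{explicit} for $v\in\Omega_2(F)$, and the splitting of $p_v$ for $v\in\Omega_{1+1}(F)$) that $(\dim q_v-\dim(q_v-p_v)_\an)/2>t$. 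In the direction $(\Rightarrow)$: from $i_W(q_E)>t$ one bounds $\dim(p_E)_\an\le\dim((p-q)_E)_\an+\dim(q_E)_\an\le\dim(p-q)_\an+\dim q-2t-2$, applies \eqref{hm} again, and verifies at each $v$ that $\dim(p_v-q_v)_\an+\dim q_v-2t-2<2^n$, whence $\dim(p_E)_\an<2^n$ and $i_W(p_E)>s$. These local verifications are the actual work and are absent from your proposal.
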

\begin{proof}
 Throughout the proof we use the following observation which follows from formula~\eqref{isotr}. Namely, a twist of a Rost motive $R(i)$ appears in the complete decomposition of $M((q_v)_\an)$ if and only if the motive $R(i+i_W(q_v))$ appears in the complete decomposition of $M(q_v)$.

 \noindent
``$\Longleftarrow$". Assume that $i_W(p_E) > s $ for some field extension $E$ of $F$ (note that $E$ is not necessarily a number field). Recall that $R(s)$ is a binary motive, which is a direct summand of $M(p)$. It follows that $R_E$ is split. Then the motive of the Pfister form $\pi_E$ is also split, since it is a sum of shifts of the motive $R_E$. It follows that $\pi_E$ and, thus, $p_E=f_E \otimes \pi_E$  are split quadratic forms.
Therefore, we have the following equality in the Witt ring $W(E)$\,:
$$q_E=(q-p)_E + p_E = (q-p)_E. \, $$

It follows that $(q_E)_\an \simeq (q_E-p_E)_\an$. Therefore, we have
$$i_W(q_E) = \frac{\dim q_E - \dim (q_E-p_E)_\an}{2} \geq \frac{\dim q - \dim (q-p)_\an}{2} \, .$$
\noindent We claim that the following inequality holds for every $v \in \Omega(F)$\ :
\begin{equation}
\label{ineq1}
\frac{\dim q_v - \dim (q_v-p_v)_\an}{2} > t \, .
\end{equation}
Note that this will imply the necessary inequality  $i_W(q_E)> t$, since by formula~\eqref{hm} we have
${\dim (q-p)_\an = {\max}_{v \in \Omega(F)} \dim (q_v-p_v)_\an}$. In order to prove formula~\eqref{ineq1} we consider two cases: $v \in \Omega_{2}(F)$ and $v \in \Omega_{1+1}(F)$.

Assume $v \in \Omega_{2}(F)$. By the construction of $p$ we have
$$\dim (q_v-p_v)_\an = | \dim (q_v)_\an - \dim(p_v)_\an  |  = | \dim (q_v)_\an - \dim [(q_v)_\an]_{k(v)} |  $$
Therefore, \begin{multline*}
\frac{\dim q_v - \dim (q_v-p_v)_\an}{2} = \frac{ 2i_W(q_v) + \dim (q_v)_\an - | \dim (q_v)_\an - \dim [(q_v)_\an]_{k(v)} | }{2}= \\ = i_W(q_v) + m_0({(q_v)}_\an) + \ldots + m_{k(v)}({(q_v)}_\an) > t  \, .
\end{multline*}
Note that the last equality follows from formula~\eqref{explicit} and the last inequality is a consequence of the decomposition of $M({(q_v)}_\an)$; see formula~\eqref{dec}.

Assume now $v \in\Omega_{1+1}(F)$. Then $^vM$ decomposes into a sum of two Tate motives, and one of them is $\F_2(t)$. Therefore, $\F_2(t)$ is a direct summand of $M(q_v)$. Hence $i_W(q_v)> t$. \\
Since by the construction of $p$, $p_v$ is split, we obtain

$$\frac{\dim q_v - \dim (q_v-p_v)_\an}{2} = \frac{\dim q_v - \dim (q_v)_\an}{2} = i_W(q_v) > t. \, $$ \\

\noindent ``$\Longrightarrow$". Assume that $i_W(q_E) > t $ for some field extension $E$ of $F$. In the Witt ring $W(E)$ we have
$$p_E =(p_E-q_E) + q_E \, .$$
Therefore, \begin{equation}
\label{ineq2}
\dim (p_E)_\an \leq  \dim (p_E -q_E)_\an + \dim (q_E)_\an \leq \dim (p -q)_\an + \dim q - 2t-2
\end{equation}
Note that in order to show $i_W(p_E) > s$, it is sufficient to prove the following inequality for every $v \in \Omega(F)$:
\begin{equation}
\label{ineq3}
\dim (p_v -q_v)_\an  + \dim q_v - 2t-2 < 2^n \, .
\end{equation}
\noindent Indeed, it follows from formula~\eqref{hm} that $\dim (p -q)_\an + \dim q - 2t-2 < 2^n$. Thus, from formula~\eqref{ineq2} we obtain $\dim (p_E)_\an < 2^n$. Hence $i_W(p_E) > (\dim p_E - 2^n)/2=s$.

 In order to prove formula~\eqref{ineq3} we consider two cases: $v \in \Omega_{2}(F)$ and $v \in \Omega_{1+1}(F)$. Assume first $v \in \Omega_2(F)$. Since $^v\!R(t-i_W(q_v))$ is present in the motivic decomposition of the excellent form $(q_v)_\an$ (see \eqref{dec}), we have $t \geq m_0(v)+ \ldots + m_{k(v)-1}(v) + i_W(q_v)$. Therefore,   \begin{multline*}
 2t \geq 2(m_0(v)+ \ldots + m_{k(v)}(v)) - 2 m_{k(v)}(v) + 2i_W((q_v)) \\ = \dim (q_v) - | \dim (q_v)_\an - \dim (p_v)_\an |  - 2 m_{k(v)}(v)\, ,
   \end{multline*}
 \noindent where the equality follows from~\eqref{explicit} and property~\eqref{prop2} of $p$.\\
 Using the above inequality for the left-hand side of \eqref{ineq3} and property~\eqref{prop3} of $p$ we obtain
$$ \dim (p_v -q_v)_\an  + \dim q_v - 2t-2 \leq 2A(v)+2 m_{k(v)}(v) -2 = 2^n -2 <2^n \, ,$$
 \noindent where the last equality follows from the explicit expression of $A(v)$ and $m_{k(v)}(v)$; see Section~\ref{excellent-dec}.

 Assume now that $v \in \Omega_{1+1}(F)$. Then, by construction, the form $p_v$ is split and we have
 ${\dim (p_v -q_v)_\an = \dim (q_v)_\an = \dim q_v - 2i_W(q_v)}$.

 Since $^v\!M$ is split, we have  $^v\!M \simeq \F_2(t) \oplus \F_2(t + 2^{n-1} -1)$. Therefore, the Tate motive $\F_2(t + 2^{n-1} -1)$ is present in the motivic decomposition of $p_v$, and, by duality (see \cite[\S 65]{EKM}), the same is true for the trivial Tate motive $\F_2$ twisted by
 $$\dim q_v -2 - (t + 2^{n-1} -1)= \dim q_v - t - 2^{n-1} - 1 \, . $$
 \noindent It follows that $i_W(q_v) > \dim q_v - t - 2^{n-1} - 1$. Using this we obtain the necessary inequality~\eqref{ineq3}. Indeed,
 \begin{multline*}
 \dim (p_v -q_v)_\an  + \dim q_v - 2t-2 = 2\dim q_v - 2i_W(q_v) -2t -2 \\ < 2\dim q_v - 2(\dim q_v - t - 2^{n-1} - 1) -2t -2 = 2^n.
 \end{multline*}
\end{proof}
In order to complete the proof of the theorem it remains to consider the case $n=1$.
Note that in this case the quadratic form $q$ is even-dimensional: $\dim q = 2d+2$.
For every $v\in \Omega_{1+1}(F)$ the motive $^v\!M$ is split, ${^v\!M \simeq \F_2(d) \oplus \F_2(d)}$, and therefore the form $q_v$ is split as well.
Let $v\in \Omega_{2}(F)$. Then the motive $^v\!M$ is indecomposable. It follows that the quadratic form $q_v$ has a non-trivial discriminant, and
${^v\!M \simeq M(\Spec F_v(\sqrt{\disc q_v}\hs)\hs)(d)}$; see Section~\ref{excellent-dec}.
In both cases the following equivalence holds: for every $v \in \Omega(F)$ and for every field extension $E$ of $F_v$  we have
\begin{equation}
\label{disc}
(q_v)_E \, \text{ is split} \quad \Longleftrightarrow \quad (q_v)_E \, \text{ has trivial discriminant}. \,
\end{equation}
Note that $ \Omega_{2}(F)$ is non-empty, since $q$ is anisotropic. Therefore, the discriminant of $q$ is non-trivial and it becomes trivial over the quadratic field extension $K=F(\sqrt{\disc q})$ of $F$.

We claim that the quadratic form $q$ is split over $K$. Since $K$ is a number field, by the Hasse--Minkowski theorem (see formula~\eqref{hm}), it is sufficient to show that $(q_{K})_w$ is split for every $w \in \Omega(K)$. Note that $K_w$ is an extension of $F_v$ for some $v \in \Omega(F)$. Since $\disc q_{K_w}$ is trivial, the claim follows from~\eqref{disc}.

It follows that $q$ becomes hyperbolic over the function field of the $1$-fold Pfister form $\pi = \langle 1, - \disc q \rangle$. Then, by \cite[Theorem~VII.3.2]{Lam} the form $q$ is divisible by $\pi$. Finally, by Proposition~\ref{vishik1} $M(\pi)(d)$ is a direct motivic summand of $q$.
\end{proof}

As an immediate corollary of the proof we obtain:
\begin{cor}\label{c1}
Let $F$ be a number field and let $q$ be a quadratic form over $F$.
Then every indecomposable binary direct summand of $M(q)$ is a twist of a Rost motive.
\end{cor}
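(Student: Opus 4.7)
The plan is to combine the construction carried out in the proof of Theorem~\ref{maintheorem} with the Krull--Schmidt principle for Chow motives of quadrics recorded in the Remark following that theorem. Let $M$ be an indecomposable binary direct summand of $M(q)$ over $F$. Since $M_{\Fbar}\simeq\F_2(a)\oplus\F_2(b)$ for some integers $a,b$, I set $N:=\F_2(a)\oplus\F_2(b)$ regarded as a binary split motive over $F$; then $N_{\Fbar}\simeq M_{\Fbar}$.

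For every place $v\in\Omega(F)$, the base change $\upv M:=M_{F_v}$ is a direct summand of $M(q_v)$ with $(\upv M)_{\Fbarv}\simeq N_{\Fbarv}$, so the hypotheses of Theorem~\ref{maintheorem} are satisfied with this $N$. I then inspect the construction in the proof of the Main Theorem: the summand of $M(q)$ produced there is either $R(t)$, where $R$ is the Rost motive of an anisotropic $n$-fold Pfister form with $n\geq 2$ (this is the summand delivered via Propositions~\ref{vishik1} and~\ref{vishik2}), or, in the terminal case $n=1$ treated at the end of the proof, is $M(\pi)(d)$ for the $1$-fold Pfister form $\pi=\langle 1,-\disc q\rangle$, which coincides with the $d$-th twist of the Rost motive of $\pi$. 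In both cases, the output is a twist of a Rost motive.

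To conclude, I invoke the Krull--Schmidt principle for motives of quadrics recorded in the Remark after Theorem~\ref{maintheorem}: two direct summands of $M(q)$ whose base changes to $\Fbar$ are isomorphic must themselves be isomorphic over $F$. Since the summand $M$ and the twist of a Rost motive produced by the Main Theorem both have base change isomorphic to $N_{\Fbar}$, and $M$ is indecomposable, the two are isomorphic over $F$. This identifies $M$ with a twist of a Rost motive, which is exactly the statement of the corollary.

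The main (and really only) obstacle I anticipate is the bookkeeping in the $n=1$ case: one should confirm that the motive of a $1$-fold Pfister form $\pi$ is by convention the Rost motive of $\pi$, so that the summand $M(\pi)(d)$ legitimately qualifies as a twist of a Rost motive. Once this convention is in place the corollary is indeed immediate from the construction in the proof of the Main Theorem together with Krull--Schmidt, justifying the phrase ``immediate corollary of the proof''.
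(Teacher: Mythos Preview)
Your argument is correct and matches the paper's intended reading of ``immediate corollary of the proof'': run the construction of Theorem~\ref{maintheorem} with $^vM:=M_{F_v}$, note that its output is always a twist of the Rost motive of an \emph{anisotropic} Pfister form (hence indecomposable), and identify this output with $M$ via Krull--Schmidt. The only refinement worth making is in the last step: the assertion that two indecomposable summands of $M(q)$ with isomorphic $\Fbar$-realizations must coincide is not Krull--Schmidt alone but uses also that each Tate motive $\F_2(i)$ appears in $M(q)_{\Fbar}$ with multiplicity one (or two only for $i=d$ when $\dim q=2d+2$, in which case both copies sit in the binary summand under consideration), so the indecomposable summand containing a given $\F_2(i)$ is uniquely determined.
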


\begin{rem}
Corollary~\ref{c1} also follows from \cite[Theorem~6.9]{IzhV00}, where motivic cohomology of simplicial varieties and the Milnor operations are used.
Conjecturally Corollary~\ref{c1} holds over any field. Voevodsky formulated in \cite[Remark~5.4]{Vo11} an even more general conjecture.
\end{rem}

Recall, that the motive of a quadric decomposes uniquely into a sum of indecomposable direct summands. Applying the main theorem we can describe the complete motivic decomposition of a quadric over every number field which has at most one real embedding. In the corollary below we consider only anisotropic quadratic forms, since by formula~\eqref{isotr} the description of the complete motivic decomposition of a quadric can be always reduced to the anisotropic case.

For a geometrically split motive $S$ we say that it has rank $r$, if $S$ is a sum of $r$ Tate motives over its splitting field.

\begin{cor}\label{c2}
Let $q$ be an anisotropic quadratic form over a number field $F$. Assume that $F$ has at most one real embedding. Then the motive $M(q)$ decomposes into a direct sum of Rost motives and a motive $S$, which is either $0$, or is indecomposable of rank $4$, $6$ or $8$, or $S \simeq S' \oplus S'(1) $, where $S'$ is indecomposable of rank $4$.

Moreover, one can explicitly recover
the decomposition of $S$ (and of $S'$) into a direct sum of Tate motives over a splitting field of $q$ from the set of Witt indices $\{i_W(q_v) \mid v \in \Omega(F)\}$.
\end{cor}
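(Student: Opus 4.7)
My plan is first to apply Theorem~\ref{maintheorem} together with Corollary~\ref{c1} iteratively, extracting every binary indecomposable direct summand of $M(q)$; each of these will be a twist of a Rost motive. By the Krull--Schmidt property for quadrics (which holds by \cite[Theorem~3.11]{vish-lens}), the resulting decomposition
$$M(q)\simeq \bigl(\bigoplus_i R_i(t_i)\bigr)\oplus S$$
is unique, and my remaining task is to identify the structure of the piece $S$, which by construction contains no binary direct summand, and then to show that its shape is read off from the collection of local Witt indices.

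\textbf{Local structure.} I would next exploit the hypothesis that $F$ has at most one real embedding: at every place $v\in\Omega(F)$ which is not the (possibly unique) real place, the completion $F_v$ is either $\C$ or a non-archimedean local field, and in both cases every anisotropic quadratic form has dimension at most~$4$. Combining formula~\eqref{isotr} with the excellent decomposition~\eqref{dec}, the motive $M(q_v)$ at such a place is then a direct sum of Tate motives and Rost motives of $1$- or $2$-fold Pfister forms, i.e.\ of binary rank-$2$ motives whose two Tate twists over $\Fbarv$ differ by $0$ or $1$. At the real place $v_0$, if present, $M(q_{v_0})$ is again, by excellence, a sum of twisted Rost motives and Tate motives, but now Pfister forms of higher fold are permitted, so paired Tate twists can lie farther apart.

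\textbf{Bounding the rank of $S$.} For an arbitrary non-binary indecomposable direct summand $S$ of $M(q)$ I would write $S_{\overline{F}}=\bigoplus_j\F_2(i_j)$ and restrict to each completion; by Krull--Schmidt over $F_v$ each $S_v$ must be a direct sum of the binary pieces listed above, which severely constrains how the Tate twists inside $S_v$ can pair up over $\Fbarv$. Non-binariness of $S$ will force these local pairings to be mutually incompatible, and the hard part will be to show that the restrictions imposed by the non-real completions (only twist-gaps $0$ or $1$ are admissible there) leave $\rk S\in\{4,6,8\}$ as the only options, with the values $6$ and $8$ arising only via a Rost motive of a higher-fold Pfister form contributed by $v_0$. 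The case $S\simeq S'\oplus S'(1)$ will account for the possibility that the same rank-$4$ indecomposable block occurs together with its Tate shift when $\dim q$ is even. This combinatorial enumeration of how local excellent Pfister chains can be stitched together globally is, I expect, the main obstacle.

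\textbf{Recovery from the Witt indices.} For the moreover part, the Witt index $i_W(q_v)$ determines $\dim(q_v)_{\an}$, hence by excellence and formulas~\eqref{dec} and~\eqref{isotr} the entire decomposition of $M(q_v)$ together with the precise Tate twists of its binary constituents. Krull--Schmidt together with Theorem~\ref{maintheorem} will then translate this local data into a canonical assignment of each local binary summand either to a global Rost-motive direct summand or to the non-binary piece $S$, thereby reconstructing the full decomposition of $M(q)$, and in particular that of $S$ (and of $S'$), from $\{i_W(q_v)\mid v\in\Omega(F)\}$.
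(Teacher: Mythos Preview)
Your overall strategy---split off all binary summands via Theorem~\ref{maintheorem} and Corollary~\ref{c1}, then analyze the remainder $S$ by comparing local decompositions---matches the paper's, and your analysis of the local structure at finite and real places is correct. The combinatorial enumeration you describe does succeed in listing the possible shapes of $S_{\overline{F}}$ (rank $4$, $6$, or $8$) and in proving \emph{indecomposability} of $S$ in the first two cases and in the rank-$8$ case when $\disc q$ is non-trivial: there one finds a finite place $v$ at which $M(q_v)$ has an indecomposable summand becoming $\F_2(d)^{\oplus 2}$, which obstructs any splitting of $S$.

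The genuine gap is in the opposite direction. When $S_{\overline{F}}$ has rank $8$ and $\disc q$ is trivial, you must show that $S$ \emph{does} decompose as $S'\oplus S'(1)$ over $F$. This cannot be extracted from the local combinatorics alone: over every completion $F_v$ the motive $S_v$ splits into binary pieces in a manner compatible both with $S$ being indecomposable and with $S\simeq S'\oplus S'(1)$, so no amount of ``stitching together local excellent Pfister chains'' will distinguish the two. You have to \emph{construct} the global summand $S'$. The paper does this by building an auxiliary $2^n$-dimensional form $\tilde q$ with $(\tilde q_v)_{\an}\simeq (q_v)_{\an}$ at all finite $v$ and $\tilde q_{\R}$ positive definite, showing via Proposition~\ref{vishik2} and the Arason--Pfister Hauptsatz that the upper motive $U$ of $\tilde q$ satisfies $U(s)\mid M(q)$ (Lemma~\ref{lupper}), then proving that the first Witt index of $\tilde q$ exceeds $1$ (Lemma~\ref{lwitt}) so that Vishik's \cite[Theorem~4.13]{vish-lens} forces $S\simeq U(s)\oplus U(s)(1)$. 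None of this is purely local or combinatorial; it uses global arithmetic (Proposition~\ref{existence}) and nontrivial structural results about Witt rings and motives of quadrics. Your proposal, as written, does not supply a mechanism for this step.
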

\begin{proof}
Without loss of generality we may assume that
$\dim q\ge 4$ and $F$ has one real place.

First we apply Theorem~\ref{maintheorem} to all split binary motives $N$ and split off all binary direct summands in $M(q)$.
Let us denote the remaining direct summand as $S$ and assume that it is non-zero.

Let first $\dim q=2d+1\ge 5$. Over the finite places the motive of the form $q$ is either split or contains an indecomposable direct summand, which over a splitting field of $q$ is isomorphic to $\F_2(d-1)\oplus\F_2(d)$. Over the real place the motive of $q$ contains indecomposable direct summands, which over a splitting field of $q$ become isomorphic to $\F_2(s)\oplus\F_2(d)$ and $\F_2(d-1)\oplus\F_2(2d-s-1)$ respectively with some $0\le s\le d-2$, $d=s+2^r-1$ for some $r\geq 2$. Therefore $S$ over $\overline F$ is isomorphic to $$\F_2(s)\oplus\F_2(d-1)\oplus\F_2(d)\oplus\F_2(2d-s-1),$$
and $S$ is indecomposable over $F$.

Similarly, if $\dim q=2d+2$, then over the finite places the motive of $q$ is either split or contains an indecomposable direct summand, which over a splitting field of $q$ is isomorphic to $\F_2(d)\oplus\F_2(d)$, or contains indecomposable summands, which over a splitting field of $q$ become isomorphic to $\F_2(d-1)\oplus\F_2(d)$ and $\F_2(d)\oplus\F_2(d+1)$ respectively. Over the real place the motive of $q$ contains either indecomposable direct summands, which over a splitting field of $q$ are isomorphic
to $\F_2(s)\oplus\F_2(d+1)$, $\F_2(d-1)\oplus\F_2(2d-s)$, and
$\F_2(d)\oplus\F_2(d)$ (for some $0\le s\le d-2$, $d=s+2^r-2$ for some $r \geq 2$), or which are isomorphic over a splitting field of $q$ to $\F_2(s)\oplus\F_2(d)$ and $\F_2(d)\oplus\F_2(2d-s)$ (for some $0\le s\le d-1$, $d=s+2^r-1$ for some $r \geq 1$), or which are isomorphic over a splitting field of $q$ to $\F_2(s)\oplus\F_2(d)$, $\F_2(d)\oplus\F_2(2d-s)$, $\F_2(s+1)\oplus\F_2(d+1)$, and $\F_2(d-1)\oplus\F_2(2d-s-1)$ (for some $0\le s\le d-2$, $d=s+2^r-1$ for some $r \geq 2$).

Thus, if $\dim q=2d+2$, then $S$ over $\overline F$ is isomorphic either to
$$\F_2(s)\oplus\F_2(d)^{\oplus 2}\oplus\F_2(2d-s)$$ (for some $0\le s\le d-1$, $d=s+2^r-1$ for some $r\geq 1$)
or to
$$\F_2(s)\oplus\F_2(d-1)\oplus\F_2(d)^{\oplus 2}\oplus\F_2(d+1)\oplus\F_2(2d-s)$$ (for some $0\le s\le d-2$, $d=s+2^r-2$ for some $r \geq 2$)
or to
$$\widetilde N:=\F_2(s)\oplus\F_2(s+1)\oplus\F_2(d-1)\oplus\F_2(d)^{\oplus 2}\oplus\F_2(d+1)\oplus\F_2(2d-s-1)\oplus\F_2(2d-s)$$
(for some $0\le s\le d-2$, $d=s+2^r-1$ for some $r \geq 2$), cf. Remark~\ref{pic} below.

In the first two cases the motive $S$ is indecomposable over $F$ by the same reasons as in the odd-dimensional case.
In the third case the motive $S$ is indecomposable over $F$, if the discriminant of $q$ is non-trivial, since in this case the motive of $q$ over some finite place contains an indecomposable direct summand that over a splitting field becomes isomorphic to $\F_2(d)^{\oplus 2}$.

Therefore, it remains to consider the case when the discriminant of $q$ is trivial and $S$ over $\overline F$ is isomorphic to $\widetilde N$.
Note that there is a (unique) motive $N'$ such that $\widetilde N=N'\oplus N'(1)$, and by the above considerations the motive $S$ is a direct sum of at most two indecomposable summands.
We claim that the motive $S$ is actually decomposable
as $S'\oplus S'(1)$, where $S'$ is indecomposable over $F$ and $S'$ is isomorphic to $N'$ over $\overline F$.

From now on we assume without loss of generality that ${q}_{\R}$ is positive definite. Since  $S$ over ${\overline F}$ is isomorphic to $\widetilde{N}$, we have $\dim q = 2^n m$, where $m$ is odd,  $n=r+1 \geq 3$ and $s= d -2^r+1$.

First we shall reduce the problem to the case $\dim q = 2^n$ (i.e. to the case $s=0$). There exists a quadratic form $\tilde{q}$ of dimension $2^n$ over $F$ such that $\tilde{q}_{\R} \simeq \langle 1,1 \rangle ^{\otimes n}$ and $(\tilde{q}_v)_{\an} \simeq (q_v)_{\an}$ for every finite place $v \in \Omega(F)$. Indeed, the existence of $\tilde{q}$ follows from Proposition~\ref{existence}, since the equalities $\varepsilon_v(\tilde{q}) = \varepsilon_v(q)$ and $d(\tilde{q}_v)=1$ hold for every place $v \in \Omega(F)$. Note that $s = d -2^{n-1}+1 =  (\dim q -\dim \tilde{q})/2$. Let $U$ be the upper indecomposable motive of $M(\tilde{q})$ (see \cite[\S2b]{upper}). Lemma~\ref{lupper} below shows that $U(s)$ is a direct summand of $M(q)$. It follows that we can replace $q$ by $\tilde{q}$, that is we can assume $\dim q = 2^n$.

Before we prove the lemma, let us note that the following isomorphisms hold for the forms $q$ and $\tilde{q}$:
$$\langle 1, 1 \rangle \otimes \tilde{q} \simeq \pi \quad \text{  and  } \quad q \simeq \tilde{q} \otimes f  \simeq  \tilde{q} \perp (\pi \otimes g) \, , $$
\noindent where $\pi = \langle 1, 1 \rangle^{\otimes (n+1)}$, $f =  \langle 1 \rangle^{\oplus m} $ and $g =  \langle 1 \rangle^{\oplus (m-1)/2}$. One can check each of these isomorphisms locally for every $v \in \Omega(F)$. For a unique real place the isomorphisms clearly hold. In order to check that they also hold for an arbitrary finite place $v \in \Omega(F)$, note that the $(n+1)$-fold Pfister form $\pi_v$ and the form $\langle 1, 1 \rangle \otimes \tilde{q}_v$ are hyperbolic (recall that the form $\tilde{q}_v$ is either hyperbolic or $(\tilde{q}_v)_{\an}$ is an anisotropic $2$-fold Pfister form).

\begin{lem}\label{lupper}
The motive $U(s)$ is a direct summand of $M(q)$.
\end{lem}
\begin{proof}
By Proposition~\ref{vishik2} it is sufficient to show that for every field extension $E/F$ the following equivalence holds:
$$i_W(q_E) > s \quad \Longleftrightarrow \quad  i_W(\tilde{q}_E) > 0 \, . $$

\noindent ``$\Longleftarrow$". Assume that $i_W(\tilde{q}_E) > 0$ for some field extension $E$ of $F$. Then the $(n+1)$-fold Pfister form ${\pi_E \simeq \langle 1, 1 \rangle\otimes\tilde{q}_E}$ is isotropic and, therefore, is hyperbolic over $E$. Since $q \simeq \tilde{q} \perp (\pi \otimes g) $, we get $q_E =\tilde{q}_E$ in the Witt ring $W(E)$. It follows that $i_W(q_E) > s$.

\noindent ``$\Longrightarrow$". Assume that $i_W(q_E) > s$ for some field extension $E$ of $F$. Then the form $q_E$ can be represented in $W(E)$ by  a form of dimension $< 2^n$. Therefore the form $\langle 1, 1 \rangle \otimes q_E$  can be represented in $W(E)$ by a form of dimension $< 2^{n+1}$.
Note that $$\langle 1, 1 \rangle \otimes q_E \simeq \pi_E \otimes f_E \in I^{n+1}(E),$$ where $I$ denotes the fundamental ideal in the Witt ring. It follows from \cite[Hauptsatz~X.5.1]{Lam} that $\pi_E \otimes f_E = 0$ in $W(E)$, that is the form  $\pi_E \otimes f_E$ is hyperbolic. By \cite[Cor.~3.3.4]{Kahn} the $(n+1)$-fold Pfister form $\pi_E$ is also hyperbolic over $E$. Finally, since $q \simeq \tilde{q} \perp (\pi \otimes g)$, we get $q_E =\tilde{q}_E$ in $W(E)$. Now it follows from $i_W(q_E) > s$ that $i_W(\tilde{q}_E) > 0$.
\end{proof}

Next we prove the following lemma.

\begin{lem}\label{lwitt}
The first Witt index of $q$ is greater than $1$.
\end{lem}
\begin{proof}
Consider the quadratic form  $\varphi \perp (-q)$, where  $\varphi = \langle 1, 1 \rangle^{\otimes n}$, and denote by $\rho$ the anisotropic part of this form. It is easy to see that $\rho$ has dimension $4$, trivial discriminant and represents $1$. In particular, $\rho$ is a $2$-fold Pfister form and we can write $\rho = \langle\! \langle a,b \rangle\! \rangle$ for some elements $a, b \in  F^*$ with $a_{\R}<0$ (indeed, if $a_{\R}>0$ and $b_{\R}>0$, note that $\langle\! \langle a,b \rangle\! \rangle \simeq \langle\! \langle -ab,a \rangle\! \rangle$).

Since the both forms $\rho$ and $\varphi = \langle 1, 1 \rangle^{\otimes n} $ become hyperbolic over $F(\sqrt{a})$, the form $q$ is also hyperbolic over $F(\sqrt{a})$.
Hence, by \cite[Theorem~VII.3.2]{Lam} the form $q$ is divisible by $\langle 1, -a \rangle$.

To finish the proof observe that every isotropic quadratic form $g$ of the type $g=\langle 1, -a \rangle \otimes f$ for some form $f$ of dimension bigger than $1$ has Witt index at least $2$. Indeed, the form $\langle 1, -a \rangle \otimes f$ is hyperbolic, if $a$ is a square.
Otherwise, we can write $g(x,y)=f(x)-af(y)=0$ for some non-zero vector $(x,y)$, and $g(ay,x)=0$ with $(ay,x)$ not collinear to $(x,y)$; see also \cite[Lemma~5.2]{vish-lens} for a more general statement.
\end{proof}

It follows now from Lemma~\ref{lwitt} and \cite[Theorem~4.13]{vish-lens} that the motive $S$ from the beginning of the proof of the
present corollary is isomorphic over $F$ to $S'\oplus S'(1)$ with $S'=U(s)$.
\end{proof}

\begin{rem}\label{pic}
In \cite[Section~4]{vish-lens} Vishik visualizes motivic decompositions of quadrics using diagrams, where the dots represent Tate motives and the arcs represent connections between them in the motive of a quadric. Using Vishik's diagrams we can represent all possibilities for the motive $S$ from Corollary~\ref{c2} graphically as follows:

$$\xymatrix{
*=0{\bullet} \ar@{-}@/^/[rr]!U & \ldots     &     *=0{\bullet} \ar@{-}@/^/[r]!U & *=0{\bullet}\ar@{-}@/^/[rr]!U & \ldots &*=0{\bullet}
 }
$$

$$\xymatrix@R-1.5pc{
 && *=0{\bullet}\ar@{-}[dd] && \\
*=0{\bullet} \ar@{-}@/^/[rru]!U & \ldots       &  & \ldots & *=0{\bullet} \\
&& *=0{\bullet} \ar@{-}@/_/[rru]!U &&
 }
$$

$$\xymatrix@R-1.5pc{
 &&& *=0{\bullet}\ar@{-}[dd] &&& \\
*=0{\bullet} \ar@{-}@/^/[rr]!U & \ldots     & *=0{\bullet}\ar@{-}@/^/[ru]!U &  & *=0{\bullet}\ar@{-}@/_/[rr]!U & \ldots & *=0{\bullet} \\
&&& *=0{\bullet} \ar@{-}@/_/[ru]!U &&&
 }
$$

$$\xymatrix@R-1.5pc{
 &&&& *=0{\bullet}\ar@{-}[dd] &&&& \\
*=0{\bullet} \ar@{-}@/^/[r]!U & *=0{\bullet}\ar@{-}@/^/[rr]!U & \ldots     & *=0{\bullet}\ar@{-}@/^/[ru]!U &  & *=0{\bullet}\ar@{-}@/_/[rr]!U & \ldots & *=0{\bullet}\ar@{-}@/_/[r]!U & *=0{\bullet}\\
&&&& *=0{\bullet} \ar@{-}@/_/[ru]!U &&&&
 }
$$

$$\xymatrix@R-1.5pc{
 &&&& *=0{\bullet}\ar@{-}@/^/[rrrd]!U &&&& \\
*=0{\bullet} \ar@{-}@/^/[rrr]!U & *=0{\bullet}\ar@{-}@/_/[rrrd]!U & \ldots     & *=0{\bullet}\ar@{-}@/^/[ru]!U &  & *=0{\bullet}\ar@{-}@/_/[rrr]!U & \ldots & *=0{\bullet} & *=0{\bullet}\\
&&&& *=0{\bullet} \ar@{-}@/_/[ru]!U &&&&
 }
$$

The last diagram represents the splitting of $S$ as a direct sum $S'\oplus S'(1)$.
\end{rem}

\begin{exple}
Let $q$ be an $11$-dimensional quadratic form over a number field $F$. Assume that the only values of the Witt index of $q$ over the real places
are $0$ and $2$, that is
$$\{i_W( q_v) \mid v \in \Omega_{\R}(F)\} = \{0,2\} \, . $$
Then we can find a complete decomposition of $M(q)$. Indeed, let $v$ be a real place of $F$. If $i_W(q_v)=0$, we have
\begin{equation}
\label{11-dim0}
M(q_v)= R_4 \oplus R_4(1) \oplus R_4(2) \oplus R_3(3)\oplus R_2(4)
\end{equation}
\noindent and if  $i_W(q_v)=2$, then
\begin{equation}
\label{11-dim2}
M(q_v)= \F_2 \oplus \F_2(1) \oplus R_3(2) \oplus R_3(3) \oplus R_3(4) \oplus \F_2(8) \oplus \F_2(9) \, ,
\end{equation}
\noindent where $R_i$ is the Rost motive of a unique anisotropic $i$-fold Pfister form over $F_v =\R$,
$$(R_i)_{\Fbarv} \simeq \F_2 \oplus \F_2(2^{i-1}-1).$$

If $v$ is a finite place of $F$, then $q_v$ is either split or $\dim (q_v)_{\an} =3$. In the first case the motive $M(q_v)$ is split. In the second case $(q_v)_{\an}$  is a subform of a unique anisotropic 2-fold Pfister form over $F_v$. Denote by $R_v$ the Rost motive of this Pfister form, $(R_v)_{\Fbarv} \simeq \F_2 \oplus \F_2(1) $. Then $M((q_v)_{an}) \simeq R_v$ and we have
\begin{equation}
\label{11-dim3}
M(q_v)=  {\oplus}_{i=0}^{3}( \F_2(i)\oplus \F_2(9-i)) \oplus R_v(4)  \, .
\end{equation}

Note that the hypotheses of Theorem~\ref{maintheorem} hold for $N= \F_2(1) \oplus \F_2(8) $ and for $N= \F_2(3) \oplus \F_2(6)$.
It follows that $M(q)$ has two direct summands $\widetilde{R_4}(1)$ and $\widetilde{R_3}(3)$, where $\widetilde{R_i}$ is the Rost motive of some
anisotropic $i$-fold Pfister form over $F$. Therefore, for some motive $U$ we have
\begin{equation}
\label{complete}
M(q) = U \oplus \widetilde{R_4}(1) \oplus \widetilde{R_3}(3) \, .
\end{equation}
It follows from decompositions~\eqref{11-dim0} and \eqref{11-dim2} that the motive $U$ is indecomposable. Thus, decomposition~\eqref{complete}
of the motive $M(q)$ is complete.
Note that the motive $U$ in the decomposition is the upper motive of $M(q)$ in terms of \cite[\S2b]{upper}.
\end{exple}

\bigskip
{\sc Acknowledgments.}
The authors are grateful to David Leep for his help with quadratic forms over number fields.

\end{document}